\begin{document}

\title{On the recursive structure of multigrid cycles
\thanks{Research supported by the Israel Science Foundation, Grant No. 1639/19.}}


\author{Or Avnat\thanks{ Department of Computer Science,
Technion---Israel Institute of Technology, Haifa, Israel.
(\email{oravnat@gmail.com}, \email{irad@cs.technion.ac.il}).}
\and Irad Yavneh\footnotemark[2]}

\ifpdf
\hypersetup{
  pdftitle={On the recursive structure of multigrid cycles},
  pdfauthor={O. Avnat and I. Yavneh}
}
\fi

\date{\today}
\maketitle

\begin{abstract}
A new fixed (non-adaptive) recursive scheme for multigrid algorithms is introduced. Governed by a positive parameter $\kappa$ called the cycle counter, this scheme generates a family of multigrid cycles dubbed $\kappa$-cycles. The well-known $V$-cycle, $F$-cycle, and $W$-cycle are shown to be particular members of this rich $\kappa$-cycle family, which satisfies the property that the total number of recursive calls in a single cycle is a polynomial of degree $\kappa$ in the number of levels of the cycle. This broadening of the scope of fixed multigrid cycles is shown to be potentially significant for the solution of some large problems on platforms, such as graphics processing units, where the overhead induced by numerous sequential calls to the coarser levels may be relatively significant. In cases of problems for which the convergence of standard $V$-cycles or $F$-cycles (corresponding to $\kappa=1$ and $\kappa=2$, respectively) is particularly slow, and yet the cost of $W$-cycles is very high due to the large number of coarse-level calls (which is exponential in the number of levels), intermediate values of $\kappa$ may prove to yield significantly faster run-times. This is demonstrated in examples where $\kappa$-cycles are used for the solution of rotated anisotropic diffusion problems, both as a stand-alone solver and as a preconditioner. Moreover, a simple model is presented for predicting the approximate run-time of the $\kappa$-cycle, which is useful in pre-selecting an appropriate cycle counter for a given problem on a given platform. Implementing the $\kappa$-cycle requires making just a small change in the classical multigrid cycle.
\end{abstract}

\begin{keywords}
Multigrid Cycles, Rotated Diffusion, Recursive Multigrid Algorithms, Cycle Counter
\end{keywords}
\begin{AMS}
65N55
\end{AMS}


%

\section{Introduction} \label{sec:Introduction}

Multigrid methods are especially well-known for their efficiency in solving linear systems arising from the discretization of partial differential equations (PDEs) \cite{Bra77,Bra82,BHM00,RDF06,Hac85,TOS01,Yav06}. Over many years, multigrid methods have been developed and their scope greatly expanded, and numerous multigrid algorithms now exist. One aspect that seems to have received relatively limited attention in these developments is the recursive structure of the multigrid cycle. Although adaptive multigrid cycles have certainly been studied (e.g., in \cite{Bra73,Bra82,Rue93}), in the vast majority of applications a fixed recursive strategy is employed: most commonly the so-called $V$-cycle, and often $W$-cycles or $F$-cycles. Other cycling strategies have been employed over the years for specific purposes, but they are far less common. Indeed, multigrid textbooks typically introduce the standard cycle, with the {\em cycle index}, typically denoted by $\gamma$, as a parameter that determines the number of recursive calls to the cycle routine at each level of the multigrid hierarchy, and from then on they refer only to $\gamma = 1$ or $\gamma = 2$ (corresponding to the $V$ and $W$ cycles, respectively). The $F$-cycle, which in a sense mixes $\gamma = 1$ and $\gamma = 2$, is also mentioned because it is useful in many cases. For example, the classical introductory textbook \cite{BHM00} presents the multigrid cycle with the cycle index, denoted there by $\mu$, and then mentions that, in practice, only $\mu = 1$ and $\mu = 2$ are used, corresponding to the $V$ and $W$ cycles, respectively. As in other textbooks, alternative fixed cycling strategies are not considered.

To formally define the classical multigrid cycles, we consider the solution of a linear system of the form
$$
A^1u^1 = f^1 \, ,
$$

\noindent
where the $1$ superscript indicates that this problem corresponds to the finest level of the multigrid hierarchy. The standard family of multigrid cycles with cycle index $\gamma$ and $n$ levels is defined recursively in Algorithm \ref{alg:gamma_cycle} (see, e.g., \cite{BHM00}).

\SetNlSkip{-0.5em}

\begin{algorithm2e}[h]
\DontPrintSemicolon
\label{alg:gamma_cycle}
\caption{The classical $\gamma$-cycle}
{$v^\ell \leftarrow \gamma\mbox{-}cycle(v^\ell,f^\ell,A^\ell,\ell,n,\gamma)$}\;
\Indp
\nl
If $\ell == n$ (coarsest level), solve $A^\ell v^\ell = f^\ell$ and return.\;
\nl
{\em Relax} $\nu_1$ times on $A^\ell u^\ell = f^\ell$ with initial guess $v^\ell$.\;
\nl
$f^{\ell+1} \leftarrow Restrict(f^\ell - A^\ell v^\ell)$.\;
\nl
$v^{\ell+1} \leftarrow 0$. \;
\nl
Repeat $\gamma$ times:\;
$v^{\ell+1} \leftarrow \gamma\mbox{-}cycle(v^{\ell+1},f^{\ell+1},A^{\ell+1},\ell+1,n,\gamma)$.\;
\nl
$v^\ell \leftarrow v^\ell + Prolong(v^{\ell+1})$.\;
\nl
{\em Relax} $\nu_2$ times on $A^\ell u^\ell = f^\ell$ with initial guess $v^\ell$.\;
\end{algorithm2e}

As noted above, with $\gamma = 1$ (i.e., a single recursive call per cycle) we get the classical $V$-cycle, while $\gamma = 2$ (two recursive calls) yields the $W$-cycle. The third commonly used cycle, the $F$-cycle, cannot be described in this form. Rather awkwardly, once we have defined the $\gamma$-cycle, the $F$-cycle 
is defined in Algorithm \ref{alg:F_cycle}. Here, we have a single recursive call to the $F$-cycle, followed by a call to the $V$-cycle (i.e., the $\gamma$-cycle with $\gamma = 1$).

We remark that, when $\ell = n-1$, the first recursive call in line 5 of Algorithm \ref{alg:gamma_cycle} returns the exact solution of the coarsest-level problem, so the execution of subsequent recursive calls when $\gamma > 1$ has no effect. Similarly, line 6 of Algorithm \ref{alg:F_cycle} has no effect when $\ell = n-1$. Nevertheless, we adopt this standard form for all levels. This choice typically has a negligible effect on the overall cost of the cycle, and furthermore, the additional call to the coarsest level does have an effect when the problem on line 1 is only solved approximately, which is not uncommon in real applications.

\begin{algorithm2e}[h]
\DontPrintSemicolon
\label{alg:F_cycle}
\caption{The F-cycle}
{$v^\ell \leftarrow \mbox{$F$-}cycle(v^\ell,f^\ell,A^\ell,\ell,n)$}\;
\Indp
\nl
If $\ell == n$ (coarsest level), solve $A^\ell v^\ell = f^\ell$ and return.\;
\nl
{\em Relax} $\nu_1$ times on $A^\ell u^\ell = f^\ell$ with initial guess $v^\ell$.\;
\nl
$f^{\ell+1} \leftarrow Restrict(f^\ell - A^\ell v^\ell)$.\;
\nl
$v^{\ell+1} \leftarrow 0$. \;
\nl
$v^{\ell+1} \leftarrow \mbox{$F$-}cycle(v^{\ell+1},f^{\ell+1},A^{\ell+1},\ell+1,n)$.\;
\nl
$v^{\ell+1} \leftarrow \gamma\mbox{-}cycle(v^{\ell+1},f^{\ell+1},A^{\ell+1},\ell+1,n,1)$.\;
\nl
$v^\ell \leftarrow v^\ell + Prolong(v^{\ell+1})$.\;
\nl
{\em Relax} $\nu_2$ times on $A^\ell u^\ell = f^\ell$ with initial guess $v^\ell$.\;
\end{algorithm2e}

The reason for the casual treatment of the recursive structure of multigrid algorithms is probably that there is often no practical need for alternative strategies. If the simplest and cheapest variant---the $V$-cycle---does the job, as it often does, then there seems to be no reason to look any further. If a stronger cycle is required, then we can try $\gamma = 2$, i.e., the $W$-cycle, or compromise with the $F$-cycle if that is efficient. Nevertheless, in this paper we argue that, although these three cycle types
may have been sufficient in the days of more limited machines and moderate-sized problems, it is worthwhile to reconsider this in modern-day and future computing platforms. Such platforms may have a very high throughput for big problems, but are much less efficient for a large number of sequential small problems. This observation is true for the GPU platforms we consider in this paper. More generally, efficient parallel implementation of algebraic multigrid (AMG) for unstructured problems on modern high-performance computers is known to be challenging \cite{baker2012preparing}, requiring specialized handling. Examples of this include explicit sparsification of coarse-grid operators \cite{BFG16}, gathering and redistributing coarse-grid data to reduce communication costs \cite{GGJ13}, and non-Galerkin coarsening \cite{FS14,TY15}. Even structured multigrid solvers require special redistribution techniques on coarse levels to obtain good scaling properties for very large problems \cite{ROM18}.     

Suppose that we apply the standard multigrid algorithm with $n$ levels, numbered $1$ to $n$ from finest to coarsest. In a single $V$-cycle, implemented recursively, the cycle routine is called only once per level---a total of $n$ calls. However, in the $W$-cycle it is called $2^{\ell-1}$ times on level $\ell$, $\ell = 1, \ldots \, , n$, totaling $2^n-1$ calls to the cycle routine in one complete $W$-cycle. For example, in a 2D problem with standard coarsening, the cycle routine is called once per iteration at the finest level, where there are $N$ variables. At the next level the $W$-cycle is called twice and the number of variables is N/4. Each such $W$-cycle makes two calls at the next-coarser level, with the number of variables divided again by 4. Thus, the number of activations of the cycle is doubled with each coarsening, even as the size of the problem is divided by 4. The upshot is that, if the number of operations per level depends linearly on the number of variables, then the total number of operations remains linear in $N$. However, the calls to the cycle routine are performed sequentially, and their number is evidently exponential in the number of levels. For instance, for such a problem of size $2^{10} \times 2^{10}$, about one million fine-level variables, a $W$-cycle using ten levels performs over 1000 calls to the cycle routine, compared to just ten calls for a $V$-cycle, whereas the number of operations performed by the $W$-cycle is only $50\%$ more compared to the $V$-cycle.   

This ``exponential gap'' between $V$ and $W$-cycles in the number of routine calls may be significant, especially when $(a)$ big problems are attacked; $(b)$ the total cost of coarse-grid work is relatively high because of complexity growth on coarser grids, as may happen in AMG methods which provide limited explicit control over the complexity of the operators on the coarse grids and over the rate of coarsening; $(c)$ parallel processing efficiency is significantly reduced in the $W$-cycle, because the coarse-grid problems are small and the $2^{n}-1$ cycle routine calls are performed sequentially, and not in parallel. 

Finally, on a more conceptual level, we ask: why define a general parameter---the cycle index $\gamma$---if only the values 1 or 2 are used in practice? Moreover, why is the popular $F$-cycle not describable in the standard cycling-scheme framework? 
These aspects of the standard multigrid cycle seem to be somewhat at odds with the spirit of Occam's Razor, which gives preference to simplicity of representation. 
In the next section we replace the {\em cycle index} $\gamma$ by another positive integer, $\kappa$, which we dub the {\em cycle counter} in order to distinguish it from the standard cycle index. With this we define a family of multigrid cycles whose recursive structure is determined by $\kappa$. For certain choices of $\kappa$, we obtain the three common cycles, $V$, $W$ and $F$, but for other choices we get other cycles, that are stronger than the $V$ and $F$ cycles, and yet retain the property that the total number of cycle routine calls over all levels\footnote{Although the algorithms are presented in recursive form, our arguments hold also for non-recursive implementations, which are readily available. The higher computational cost resulting from increasing $\kappa$ is due to the additional sequential traversing across levels, rather than to the recursion per se.} is polynomial in $n$, rather than exponential as in the $W$-cycle. 

The remainder of this paper is organized as follows. In \cref{sec:TheKappaCycle} we introduce the $\kappa$-cycle and present theoretical and practical complexity properties. In  \cref{sec:timePrediction} we introduce and verify in practice a simple model for predicting the run-time of $\kappa$-cycles. In  \cref{sec:NumericalTests} we test the $\kappa$-cycle performance on GPU processors, and we summarize and conclude in \cref{sec:Conclusions}.

\section{The \texorpdfstring{{$\kappa$}}{k}-cycle} \label{sec:TheKappaCycle}

\subsection{\texorpdfstring{{$\kappa$}}{k}-cycle definition} \label{subsec:RecursiveStructure}
Given a positive integer $\kappa$, the $\kappa$-cycle, defined in Algorithm \ref{alg:kappa_cycle}, performs one recursive call inheriting the same counter $\kappa$, followed by a second call with the counter reduced by one. The latter call is performed only if the counter for this call remains positive.
\begin{algorithm2e}[h]
\DontPrintSemicolon
\label{alg:kappa_cycle}
\caption{The $\kappa$-cycle}
{$v^\ell \leftarrow \kappa\mbox{-}cycle(v^\ell,f^\ell,A^\ell,\ell,n,\kappa)$}\;
\Indp
\nl
If $\ell == n$ (coarsest level), solve $A^\ell v^\ell = f^\ell$ and return.\;
\nl
{\em Relax} $\nu_1$ times on $A^\ell u^\ell = f^\ell$ with initial guess $v^\ell$.\;
\nl
$f^{\ell+1} \leftarrow Restrict(f^\ell - A^\ell v^\ell)$.\;
\nl
$v^{\ell+1} \leftarrow 0$. \;
\nl
$v^{\ell+1} \leftarrow \kappa\mbox{-}cycle(v^{\ell+1},f^{\ell+1},A^{\ell+1},\ell+1,n,\kappa)$.\;
\nl
If $\kappa > 1$\;
$v^{\ell+1} \leftarrow \kappa\mbox{-}cycle(v^{\ell+1},f^{\ell+1},A^{\ell+1},\ell+1,n,\kappa - 1)$.\;
\nl
$v^\ell \leftarrow v^\ell + Prolong(v^{\ell+1})$.\;
\nl
{\em Relax} $\nu_2$ times on $A^\ell u^\ell = f^\ell$ with initial guess $v^\ell$.\;
\end{algorithm2e}

\noindent \cref{fig:cycle_illustrations} provides an illustration of $\kappa$-cycles with 5 levels. Circles represent relaxation, corresponding to lines 2 and 8 in Algorithm \ref{alg:kappa_cycle}, while each red X represents a recursive call.

The following proposition relates the $\kappa$-cycle to the three classical multigrid cycles.

\begin{proposition} \label{prop:VFW}
$ $
\begin{itemize}
\item
For $\kappa = 1$, the $\kappa$-cycle is identical to the standard $V$-cycle.
\item
For $\kappa = 2$, the $\kappa$-cycle is identical to the standard $F$-cycle.
\item
For $\kappa \geq n$, the $\kappa$-cycle is identical to the standard $W$-cycle.
\end{itemize}
\end{proposition}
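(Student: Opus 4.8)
The plan is to verify the three claims in turn: the first is immediate from the definition, and the other two follow by induction on the number of levels $n-\ell$ lying below the current level $\ell$.

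For $\kappa=1$, simply inspect Algorithm \ref{alg:kappa_cycle}. Line 6 tests whether $\kappa>1$; with $\kappa=1$ this fails, so the second recursive call is never performed and no reduced counter is ever generated. What remains---relax $\nu_1$ times, restrict the residual, zero the coarse correction, one recursive call, prolong, relax $\nu_2$ times---coincides line for line with Algorithm \ref{alg:gamma_cycle} for $\gamma=1$, i.e.\ the $V$-cycle.

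For $\kappa=2$, I would argue by induction on $n-\ell$. At $\ell=n$ both the $2$-cycle and the $F$-cycle merely solve the coarsest system, so the base case holds. For $\ell<n$, the $2$-cycle at level $\ell$ executes line 5 as a recursive $2$-cycle on level $\ell+1$ and, since $2>1$, executes line 6 as a recursive $1$-cycle on level $\ell+1$; by the first claim the latter is a $V$-cycle, i.e.\ a $\gamma$-cycle with $\gamma=1$. This is exactly the structure of the $F$-cycle in Algorithm \ref{alg:F_cycle} (line 5 a recursive $F$-cycle, line 6 a $\gamma$-cycle with $\gamma=1$), once we know---by the induction hypothesis---that the recursive $2$-cycle on level $\ell+1$ coincides with the recursive $F$-cycle on level $\ell+1$. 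All surrounding operations (relaxations, transfers, zeroing) are identical, so the two cycles agree.

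For $\kappa\ge n$, the statement I would actually prove by induction on $n-\ell$ is the slightly more general claim that whenever $\kappa\ge n-\ell+1$, the $\kappa$-cycle at level $\ell$ is identical to the $W$-cycle (the $\gamma$-cycle with $\gamma=2$) at level $\ell$; taking $\ell=1$ then yields the proposition. The key point is that the hypothesis $\kappa\ge n-\ell+1$ forces $\kappa\ge 2$ at every non-coarsest level, so line 6 of Algorithm \ref{alg:kappa_cycle} is always executed and the $\kappa$-cycle makes exactly two recursive calls on level $\ell+1$, just as the $W$-cycle does. These two calls carry counters $\kappa$ and $\kappa-1$, and both are still $\ge n-(\ell+1)+1$, so by the induction hypothesis each coincides with the $W$-cycle on level $\ell+1$; the base case $\ell=n$ is again trivial, so the full recursion trees coincide. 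I expect the only subtlety---hardly an obstacle---to be choosing the induction hypothesis with the sharp threshold $\kappa\ge n-\ell+1$ rather than the weaker $\kappa\ge n$, so that the decremented counter $\kappa-1$ appearing on the next-coarser level still satisfies the hypothesis; with the weaker bound the induction does not close. One should also note the harmless edge case at $\ell=n-1$, where the second coarsest-level call has no effect on the computed result (as already remarked in the text following Algorithm \ref{alg:F_cycle}), so ``identical'' is understood at the level of the algorithmic template, which is how all three cycles are specified here.
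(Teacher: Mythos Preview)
Your proposal is correct and follows essentially the same approach as the paper. The paper argues the three cases in the same order and with the same key observations; for $\kappa\ge n$ it phrases the argument as ``the smallest cycle counter appearing at level $\ell$ decreases by one per level, hence stays positive throughout,'' which is exactly the content of your strengthened induction hypothesis $\kappa\ge n-\ell+1$, just stated globally rather than inductively.
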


\begin{proof}
For $\kappa = 1$, Algorithm \ref{alg:kappa_cycle} evidently performs just a single recursive call on each level, so it is indeed identical to the $V$-cycle. Given this, observe that for $\kappa = 2$, Algorithm \ref{alg:kappa_cycle} performs a recursive call with $\kappa = 2$, followed by a $V$-cycle, so it is indeed identical to the $F$-cycle of Algorithm \ref{alg:F_cycle}. For the third claim, observe that the smallest cycle counter that appears in a cycle routine call at level $\ell>1$ is smaller by one than the smallest cycle counter that appears in a cycle routine call at the next-finer level $\ell-1$. If the finest-level $\kappa$ is at least $n$, this implies that $\kappa$ is positive in the entire cycle, implying two recursive calls on each level but the coarsest. Therefore, for $\kappa \geq n$, the $\kappa$-cycle is equivalent to the $\gamma$-cycle of Algorithm \ref{alg:gamma_cycle} with $\gamma = 2$, i.e., the $W$-cycle.
\end{proof}

\begin{figure}
\centering
\subfigure[1-cycle.] {\includegraphics[scale=0.4]{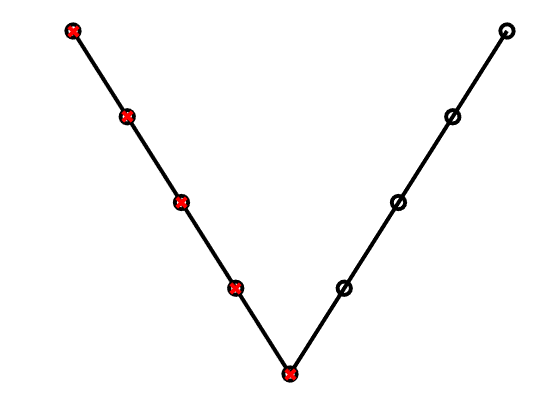}}
\subfigure[2-cycle.] {\includegraphics[scale=0.4]{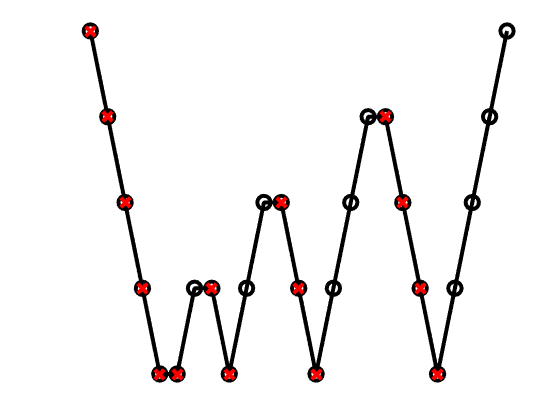}}
\subfigure[3-cycle.] {\includegraphics[scale=0.4]{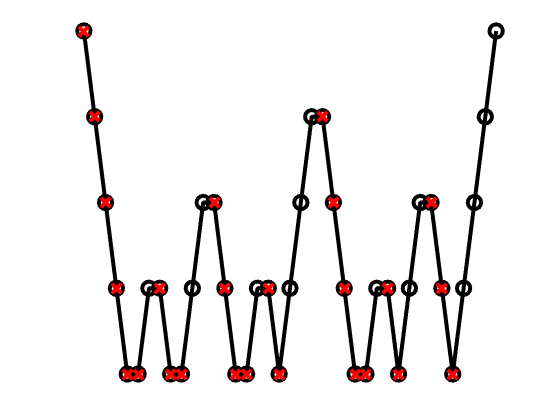}}
\subfigure[5-cycle.] {\includegraphics[scale=0.4]{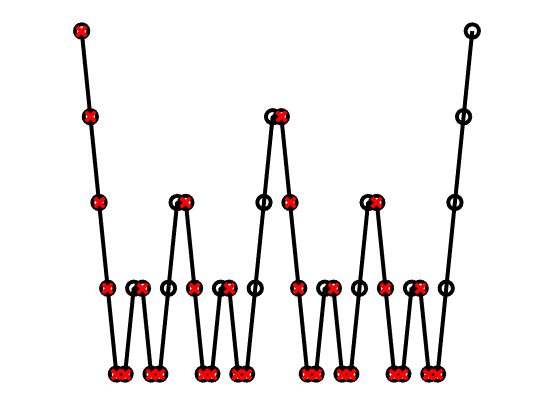}}
\caption{The $\kappa$-cycle is illustrated with 5 levels and $\kappa=1,2,3,5$. Circles denote relaxations, corresponding to lines 2 and 8 of Algorithm \ref{alg:kappa_cycle}, while red X's denote recursive calls.}
\label{fig:cycle_illustrations}
\end{figure}

In the next subsection we study properties of this family of cycles. 
We denote by $levelCalls(\kappa, \ell)$ the number of times that the $\kappa$-cycle routine is called in a single cycle on level $\ell$ when employing cycle counter $\kappa$, and $totalCalls(\kappa, n)$ denotes the total number of calls to the cycle routine in a single cycle of $n$ levels and cycle counter $\kappa$. In the proofs we commonly employ the well-known identities:
\begin{equation} \label{eq:BinomIdentity}
\binom{a-1}{b}+\binom{a-1}{b-1}=\binom{a}{b} ~~\mbox{and}~~ \sum_{j=0}^{a} \binom{a}{j} = 2^a \, .
\end{equation}

\subsection{Theoretical properties} \label{subsec:TheoreticalProperties}

We begin by computing the number of recursive calls at each level of the $\kappa$-cycle. As we will see in \cref{cor:levelCalls}, the number of calls per level is a monotonically increasing function of $\kappa$ for $1\leq\kappa\leq \ell$. For $\kappa\geq \ell$, the number of calls per level remains constant, because the $\kappa$-cycle is equivalent to a $W$-cycle in this regime.

\begin{proposition}\label{prop:levelCalls}
The number of calls to the cycle routine on level $\ell$ of a single $\kappa$-cycle with $\kappa \geq 1$ is given by
$$
levelCalls(\kappa, \ell)=\sum_{j=0}^{\min(\kappa-1,\ell-1)}\binom{\ell-1}{j} \, .
$$
\end{proposition}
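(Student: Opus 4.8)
The plan is to prove this by induction on the level index $\ell$, keeping the cycle counter $\kappa$ fixed (or rather, letting it be arbitrary and relating the counts at neighboring levels). First I would set up the recursion that governs $levelCalls$. By inspecting Algorithm \ref{alg:kappa_cycle}, a call at level $\ell$ with counter $\kappa$ spawns one recursive call at level $\ell+1$ with counter $\kappa$, and (when $\kappa>1$) a second recursive call at level $\ell+1$ with counter $\kappa-1$. Unrolling this, the multiset of counters appearing in calls at level $\ell$ (for a cycle started at the finest level $1$ with counter $\kappa$) is determined level-by-level: if at level $\ell$ we have some collection of counters, then at level $\ell+1$ each counter $c$ in that collection contributes a $c$ and, if $c>1$, also a $c-1$. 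The base case is level $1$, which has a single call with counter $\kappa$, so $levelCalls(\kappa,1)=1=\binom{0}{0}$, matching the formula since $\min(\kappa-1,0)=0$.

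Next I would establish the key recurrence $levelCalls(\kappa,\ell) = levelCalls(\kappa,\ell-1) + levelCalls(\kappa-1,\ell-1)$ for $\ell\geq 2$ and $\kappa\geq 1$, with the convention $levelCalls(0,\ell)=0$ (a nonpositive counter produces no call). This follows because the calls at level $\ell$ are exactly: one "inherited" call per call at level $\ell-1$ (contributing $levelCalls(\kappa,\ell-1)$ overall — same as running the whole thing with the counter structure shifted), plus one "decremented" call per call at level $\ell-1$ whose counter exceeds $1$. The subtle point is that the decremented calls at level $\ell$, taken together, behave exactly like the level-$(\ell-1)$ calls of a cycle with top counter $\kappa-1$: decrementing every counter in the level-$(\ell-1)$ configuration of a $\kappa$-cycle yields precisely the level-$(\ell-1)$ configuration of a $(\kappa-1)$-cycle (counters that were $1$ become $0$ and disappear, which is exactly what the "$\kappa>1$" guard enforces). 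Hence the number of decremented calls at level $\ell$ is $levelCalls(\kappa-1,\ell-1)$.

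With the recurrence in hand, the rest is a routine induction. Assuming the formula holds at level $\ell-1$ for all counters, I would write
\begin{align*}
levelCalls(\kappa,\ell) &= \sum_{j=0}^{\min(\kappa-1,\ell-2)}\binom{\ell-2}{j} + \sum_{j=0}^{\min(\kappa-2,\ell-2)}\binom{\ell-2}{j}
\end{align*}
and combine the two sums using the Pascal identity $\binom{\ell-2}{j}+\binom{\ell-2}{j-1}=\binom{\ell-1}{j}$ from \eqref{eq:BinomIdentity}, after re-indexing the second sum by $j\mapsto j+1$ so that it contributes the $\binom{\ell-2}{j-1}$ terms. One then checks that the resulting sum runs from $j=0$ to $j=\min(\kappa-1,\ell-1)$, handling the upper-limit bookkeeping in the three cases $\kappa-1<\ell-1$, $\kappa-1=\ell-1$, and $\kappa-1>\ell-1$ (in the last case, the $\binom{\ell-1}{\ell-1}$ term is supplied correctly because the extra terms $\binom{\ell-2}{\ell-1}$ that would be out of range vanish).

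The main obstacle, I expect, is not any single computation but rather making the combinatorial bookkeeping around the upper summation limits fully rigorous — in particular justifying cleanly that the two shifted sums telescope into the claimed range for every configuration of $\kappa$ relative to $\ell$, including the boundary cases where $\kappa \geq \ell$ and the formula must collapse to $\sum_{j=0}^{\ell-1}\binom{\ell-1}{j}=2^{\ell-1}$, consistent with the $W$-cycle count promised by Proposition \ref{prop:VFW}. A clean way to sidestep most of the case analysis is to adopt the convention $\binom{m}{j}=0$ for $j>m$ or $j<0$ and to carry the sums to $j=\kappa-1$ uniformly; then the Pascal identity applies termwise with no special cases, and one reads off the stated $\min$ only at the very end.
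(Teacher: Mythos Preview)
Your proposal is correct and follows essentially the same route as the paper: induction on $\ell$, the recurrence $levelCalls(\kappa,\ell)=levelCalls(\kappa,\ell-1)+levelCalls(\kappa-1,\ell-1)$ (with $levelCalls(0,\cdot)=0$), and Pascal's identity to collapse the two sums, with case analysis on whether $\kappa\le\ell-1$ or $\kappa\ge\ell$. The only notable difference is that the paper justifies the recurrence more directly---the single call at level~$1$ spawns one $\kappa$-cycle and one $(\kappa{-}1)$-cycle at level~$2$, so level $p{+}1$ of the original cycle is simply level $p$ of each of those two subcycles---thereby bypassing your ``decrement correspondence'' lemma, though your argument for that correspondence is also sound.
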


\begin{proof}
Note first that for $\kappa = 0$ this formula yields $levelCalls(0,\ell) \equiv 0$, as it should, because the cycle routine is never called recursively with a non-positive cycle counter in Algorithm \ref{alg:kappa_cycle}. Next, we apply induction over the levels.

For $\ell=1$:
$$
levelCalls(\kappa, \ell = 1) = \sum_{j=0}^{\min(\kappa-1,\ell-1)} \binom{\ell-1}{j} = \sum_{j=0}^{0} \binom{0}{j} = \binom{0}{0} = 1 \, ,
$$

\noindent which is correct, as there is always one call on the first level of a single cycle. Now, we assume by induction that this claim is true for level $\ell=p$ and prove it for $\ell=p+1$. To this end, we need to relate the number of calls on level $p+1$ to the number of calls on level $p$. Observe that in the processing on the finest level $1$ in Algorithm \ref{alg:kappa_cycle} we make two recursive calls to $\kappa$-cycles on level $2$: one with cycle counter $\kappa$ and one with cycle counter $\kappa - 1$. Observe also that level $p+1$ of the original cycle is level $p$ of the two cycles beginning on level $2$. This implies
$$
levelCalls(\kappa, p+1) = levelCalls(\kappa, p) + levelCalls(\kappa-1, p) \, .
$$

\noindent
Therefore, it remains to be proved that
$$
\sum_{j=0}^{\min(\kappa-1,p)}\binom{p}{j} = \sum_{j=0}^{\min(\kappa-1,p-1)}\binom{p-1}{j} + \sum_{j=0}^{\min(\kappa-2,p-1)}\binom{p-1}{j}\, .
$$

\noindent
We distinguish between two cases:
\begin{enumerate}
\item $\kappa \leq p$.

It follows that $\kappa-2 < \kappa-1 \leq p-1$, and therefore
\begin{eqnarray*}
\sum_{j=0}^{\min(\kappa-1,p-1)} \binom{p-1}{j} + \sum_{j=0}^{\min(\kappa-2,p-1)} \binom{p-1}{j} & = & \sum_{j=0}^{\kappa-1}\binom{p-1}{j} + \sum_{j=0}^{\kappa-2}\binom{p-1}{j} \\
& = & \binom{p-1}{0} + \sum_{j=1}^{\kappa-1} \left( \binom{p-1}{j} + \binom{p-1}{j-1} \right) \\ & = & \binom{p-1}{0} + \sum_{j=1}^{\kappa-1} \binom{p}{j} = \sum_{j=0}^{\kappa-1}\binom{p}{j} \\ & = & \sum_{j=0}^{\min(\kappa-1,p)} \binom{p}{j} \, .
\end{eqnarray*}

\item $\kappa\geq p+1$.

It follows that $\kappa-1 > \kappa-2 \geq p-1$, and therefore
\begin{eqnarray*}
\sum_{j=0}^{\min(\kappa-1,p-1)} \binom{p-1}{j} + \sum_{j=0}^{\min(\kappa-2,p-1)} \binom{p-1}{j} & = & \sum_{j=0}^{p-1} \binom{p-1}{j} + \sum_{j=0}^{p-1} \binom{p-1}{j} \\ & = & 2^{p-1} + 2^{p-1} = 2^p = \sum_{j=0}^{p}\binom{p}{j} \, .
\end{eqnarray*}

\noindent
But $\kappa \geq p+1$ implies that $\min(\kappa-1, p)=p$, hence,
$$
\sum_{j=0}^{p} \binom{p}{j} = \sum_{j=0}^{\min(\kappa-1,p)} \binom{p}{j} \, .
$$
\end{enumerate}

\noindent
We find that in both cases we obtain $levelCalls(\kappa, p+1)=\sum_{j=0}^{\min(\kappa-1,p)}\binom{p}{j}$.
\end{proof}

\begin{corollary} \label{cor:levelCalls} 
\mbox{ }
\begin{enumerate}
\item
For $\ell > \kappa$ we have:
$$
levelCalls(\kappa,\ell)=\sum_{j=0}^{\min(\kappa-1,\ell-1)}\binom{\ell-1}{j}=\sum_{j=0}^{\kappa-1}\binom{\ell-1}{j} \, .
$$

\noindent
Hence, the number of calls per level grows monotonically with the cycle counter for $\ell > \kappa$.
\item
For $\ell \leq \kappa$ we have:
$$
levelCalls(\kappa,\ell)=\sum_{j=0}^{\min(\kappa-1,\ell-1)}\binom{\ell-1}{j}=\sum_{j=0}^{\ell-1}\binom{\ell-1}{j}=2^{\ell-1} \, .
$$

\noindent
Hence, the number of calls per level is independent of the cycle counter for $\ell \leq \kappa$.
\end{enumerate}
\end{corollary}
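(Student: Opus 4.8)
The plan is to derive both parts directly from Proposition~\ref{prop:levelCalls}, which states that $levelCalls(\kappa,\ell)=\sum_{j=0}^{\min(\kappa-1,\ell-1)}\binom{\ell-1}{j}$, by simply resolving the minimum in the upper summation limit in the two complementary regimes $\ell>\kappa$ and $\ell\leq\kappa$. In other words, this is pure bookkeeping on top of the already-proved proposition; there is no new combinatorial content, and both displayed identities plus the two monotonicity remarks will drop out once the $\min$ is evaluated.

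For part~1, I would assume $\ell>\kappa$, i.e.\ $\ell-1\geq\kappa>\kappa-1$, so that $\min(\kappa-1,\ell-1)=\kappa-1$ and the sum collapses to $\sum_{j=0}^{\kappa-1}\binom{\ell-1}{j}$, as claimed. For the monotonicity assertion I would fix the level $\ell$ and regard $levelCalls(\cdot,\ell)$ as a function of the cycle counter: on the range $1\leq\kappa<\ell$, incrementing $\kappa$ by one appends the single extra term $\binom{\ell-1}{\kappa}$, which is strictly positive because $\kappa\leq\ell-1$, so the per-level count strictly increases with $\kappa$ throughout this range.

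For part~2, I would assume $\ell\leq\kappa$, i.e.\ $\ell-1\leq\kappa-1$, so that $\min(\kappa-1,\ell-1)=\ell-1$ and the sum becomes $\sum_{j=0}^{\ell-1}\binom{\ell-1}{j}=2^{\ell-1}$ by the second identity in~\eqref{eq:BinomIdentity}. Since this value does not involve $\kappa$, the per-level count is constant for all $\kappa\geq\ell$ — which is consistent with the fact, recorded in Proposition~\ref{prop:VFW}, that on levels $\ell\leq\kappa$ the cycle has effectively entered the $W$-cycle regime.

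The only point that needs any care — and it is the closest thing to an obstacle here — is the interface between the two cases, namely checking that the plateau value $2^{\ell-1}$ of part~2 is approached continuously from below: $levelCalls(\ell-1,\ell)=\sum_{j=0}^{\ell-2}\binom{\ell-1}{j}=2^{\ell-1}-1<2^{\ell-1}=levelCalls(\ell,\ell)$. Verifying this (it also follows from the ``append $\binom{\ell-1}{\kappa}$'' observation, taken at $\kappa=\ell-1$) confirms the full picture stated in the text preceding the corollary: $levelCalls(\kappa,\ell)$ is strictly increasing in $\kappa$ for $1\leq\kappa\leq\ell$ and constant for $\kappa\geq\ell$.
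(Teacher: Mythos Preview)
Your proposal is correct and matches the paper's approach: the paper states this result as an immediate corollary of Proposition~\ref{prop:levelCalls} with no separate proof, and your argument---resolving $\min(\kappa-1,\ell-1)$ in the two regimes and invoking \eqref{eq:BinomIdentity} for the $2^{\ell-1}$ evaluation---is exactly the intended justification. Your additional interface check at $\kappa=\ell-1$ is a nice touch that the paper does not spell out.
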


We next sum up the number of recursive calls over all the levels.

\begin{proposition} \label{prop:totalCalls}
The total number of calls to the cycle routine in a single $\kappa$-cycle with $\kappa \geq 1$ is given by
$$
totalCalls(\kappa, n) = \sum_{j=1}^{\min(\kappa,n)} \binom{n}{j} \, .
$$
\end{proposition}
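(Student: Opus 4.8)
The plan is to obtain $totalCalls(\kappa,n)$ by summing the per-level count of Proposition~\ref{prop:levelCalls} over all $n$ levels and then simplifying the resulting double sum. Writing $m=\ell-1$, we have
$$totalCalls(\kappa,n)=\sum_{\ell=1}^{n}levelCalls(\kappa,\ell)=\sum_{m=0}^{n-1}\sum_{j=0}^{\min(\kappa-1,m)}\binom{m}{j}\,.$$
The key step is to interchange the order of summation: the index pairs $(m,j)$ are exactly those with $0\le j\le m\le n-1$ and $j\le\kappa-1$, so after swapping, $j$ runs from $0$ to $\min(\kappa-1,n-1)$ and, for each such $j$, $m$ runs from $j$ to $n-1$, giving
$$totalCalls(\kappa,n)=\sum_{j=0}^{\min(\kappa-1,n-1)}\ \sum_{m=j}^{n-1}\binom{m}{j}\,.$$

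Next I would evaluate the inner sum by the hockey-stick identity $\sum_{m=j}^{n-1}\binom{m}{j}=\binom{n}{j+1}$, which itself follows from the Pascal rule in \eqref{eq:BinomIdentity} by a one-line telescoping argument (from $\binom{m+1}{j+1}=\binom{m}{j+1}+\binom{m}{j}$ one gets $\binom{m}{j}=\binom{m+1}{j+1}-\binom{m}{j+1}$, and the sum collapses since $\binom{j}{j+1}=0$). Substituting and reindexing with $i=j+1$ yields
$$totalCalls(\kappa,n)=\sum_{j=0}^{\min(\kappa-1,n-1)}\binom{n}{j+1}=\sum_{i=1}^{\min(\kappa,n)}\binom{n}{i}\,,$$
which is the claimed formula.

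An alternative route, closer to the tools already assembled in the excerpt, is induction on $n$: since $levelCalls(\kappa,\ell)$ depends only on $\kappa$ and $\ell$ (not on $n$, for $\ell\le n$), we have $totalCalls(\kappa,n)=totalCalls(\kappa,n-1)+levelCalls(\kappa,n)$, and the inductive step reduces to checking $\sum_j\binom{n-1}{j}+\sum_j\binom{n-1}{j-1}=\sum_j\binom{n}{j}$ with the appropriate limits — immediate from Pascal's rule when $\kappa\le n-1$ and from $\sum_j\binom{a}{j}=2^a$ when $\kappa\ge n$, mirroring the two cases of Corollary~\ref{cor:levelCalls}; the base case $n=1$ gives $totalCalls(\kappa,1)=1=\binom{1}{1}$.

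I expect the only delicate point in either approach to be the bookkeeping of the $\min(\cdot,\cdot)$ in the summation limits, i.e. tracking whether the binding constraint on $j$ is $j\le\kappa-1$ or $j\le m$ (equivalently $\ell>\kappa$ versus $\ell\le\kappa$), exactly the case split already made in Proposition~\ref{prop:levelCalls}. There is no deeper obstacle: once the sum is reorganized correctly, it collapses entirely via the two identities in \eqref{eq:BinomIdentity}.
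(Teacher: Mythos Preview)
Your proposal is correct. The paper takes exactly your ``alternative route'': it proves the formula by induction on $n$, using $totalCalls(\kappa,p+1)=totalCalls(\kappa,p)+levelCalls(\kappa,p+1)$ together with Proposition~\ref{prop:levelCalls}, and then splits into the two cases $\kappa\le p$ and $\kappa\ge p+1$, applying respectively the Pascal rule and the $2^a$ identity from \eqref{eq:BinomIdentity}. Your primary argument is genuinely different and somewhat slicker: by interchanging the order of summation in $\sum_{m}\sum_{j}\binom{m}{j}$ and invoking the hockey-stick identity (which, as you note, is an immediate telescoping consequence of Pascal's rule), you collapse the double sum in one stroke and avoid the case split entirely. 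The trade-off is that the paper's inductive proof stays strictly within the two identities it has already displayed in \eqref{eq:BinomIdentity} and parallels the structure of the proof of Proposition~\ref{prop:levelCalls}, whereas your direct approach introduces one additional (standard) identity but yields a shorter and more transparent derivation.
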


\begin{proof}
We apply induction over the number of levels $n$. In the case of a single level, $n=1$, we have just one cycle call, and indeed
$$
totalCalls(\kappa,n=1) = \sum_{j=1}^{\min(\kappa,n)} \binom{n}{j} = \sum_{j=1}^{1}\binom{1}{j}=\binom{1}{1}=1 \, .
$$

\noindent
We assume that the claim is true for $n = p$ and prove it by induction for $n = p+1$.
Note that $totalCalls$ is nothing but the sum of $levelCalls$ over all levels, hence,
$$
totalCalls(\kappa,p+1) = totalCalls(\kappa,p) + levelCalls(\kappa,p+1) \, .
$$

\noindent
By the induction hypothesis and \cref{prop:levelCalls}, this yields
$$
totalCalls(\kappa,p+1) = \sum_{j=
1}^{\min(\kappa,p)}\binom{p}{j} + \sum_{j=0}^{\min(\kappa
-1,p)}\binom{p}{j} \, .
$$
\noindent
We distinguish between two cases:

\begin{enumerate}
\item
$\kappa \leq p$. Hence,

\begin{eqnarray*}
totalCalls(\kappa,p+1) & = & \sum_{j=1}^{\min(\kappa,p)} \binom{p}{j} + \sum_{j=0}^{\min(\kappa-1,p)} \binom{p}{j} = \sum_{j=1}^{\kappa} \binom{p}{j} + \sum_{j=0}^{\kappa-1} \binom{p}{j} \\ & = & \sum_{j=1}^{\kappa} \binom{p}{j} + \sum_{j=1}^{\kappa} \binom{p}{j-1} = \sum_{j=1}^{\kappa} \left( \binom{p}{j}+\binom{p}{j-1} \right) \\
& = & \sum_{j=1}^{\kappa} \binom{p+1}{j} = \sum_{j=1}^{\min(\kappa,p+1)} \binom{p+1}{j}
\end{eqnarray*}

\item
$\kappa \geq p+1$. Hence,

\begin{eqnarray*}
totalCalls(\kappa,p+1)& = & \sum_{j=1}^{\min(\kappa,p)} \binom{p}{j} + \sum_{j=0}^{\min(\kappa-1,p)} \binom{p}{j}   =  \sum_{j=1}^{p} \binom{p}{j} + \sum_{j=0}^{p} \binom{p}{j} \\ & = & 2^p-1+2^p=2^{p+1}-1 = \sum_{j=1}^{p+1} \binom{p+1}{j} = \sum_{j=1}^{\min(\kappa,p+1)} \binom{p+1}{j} \, .
\end{eqnarray*}
\end{enumerate}

\noindent
In both cases the claim is satisfied.
\end{proof}

\begin{corollary} \label{cor:CycleCallComplexity}
\mbox{ }
\begin{enumerate}
\item
For $\kappa \geq n$ we have $totalCalls(\kappa, n)=\sum_{j=1}^{n}\binom{n}{j}=2^n-1$, i.e., exponential in $n$ (as we observed in \cref{prop:VFW}).
\item
For $\kappa < n$ we have $totalCalls(\kappa, n) = \sum_{j=1}^{\kappa}\binom{n}{j}$. We conclude that when the number of levels $n$ is greater than the cycle counter $\kappa$, $totalCalls(\kappa, n)$ is a $\kappa$-degree polynomial in the number of levels $n$, because
$$
\binom{n}{j} = \frac{n!}{j!(n-j)!} = \frac{1}{j!} \left( n(n-1)(n-2) \ldots (n-j+1) \right)\, ,
$$

\noindent
a $j$-degree polynomial in $n$. Hence,
$$
totalCalls(\kappa,n) = \sum_{j=1}^{\kappa}\binom{n}{j} \, ,
$$

\noindent
is a polynomial in $n$ whose degree is $\kappa$.
\end{enumerate}
\end{corollary}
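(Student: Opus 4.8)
The plan is to reduce everything to \cref{prop:totalCalls}, which already furnishes the closed form $totalCalls(\kappa,n)=\sum_{j=1}^{\min(\kappa,n)}\binom{n}{j}$, and then simply split into the two stated regimes according to whether the minimum equals $n$ or $\kappa$.

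For the first item, when $\kappa\ge n$ we have $\min(\kappa,n)=n$, so $totalCalls(\kappa,n)=\sum_{j=1}^{n}\binom{n}{j}$. I would then peel off the $j=0$ term and invoke the second identity in \eqref{eq:BinomIdentity}, namely $\sum_{j=0}^{n}\binom{n}{j}=2^n$, to conclude $totalCalls(\kappa,n)=2^n-1$. This matches the exponential count of $W$-cycle routine calls already observed in the discussion preceding \cref{prop:VFW}, as it should, since for $\kappa\ge n$ the $\kappa$-cycle coincides with the $W$-cycle.

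For the second item, when $\kappa<n$ we have $\min(\kappa,n)=\kappa$, so $totalCalls(\kappa,n)=\sum_{j=1}^{\kappa}\binom{n}{j}$. The key observation is that for each fixed $j$, $\binom{n}{j}=\frac{1}{j!}\,n(n-1)(n-2)\cdots(n-j+1)$ is a polynomial in $n$ of degree exactly $j$ with leading coefficient $1/j!$. Summing over $j=1,\dots,\kappa$, the term of highest degree is contributed solely by the $j=\kappa$ summand, with leading term $n^{\kappa}/\kappa!$, which is nonzero; hence $totalCalls(\kappa,n)$ is a polynomial in $n$ of degree exactly $\kappa$.

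There is no genuine obstacle here: the corollary is an immediate bookkeeping consequence of \cref{prop:totalCalls} together with the elementary binomial identity. The only point worth a one-line remark is that the leading coefficient of $\sum_{j=1}^{\kappa}\binom{n}{j}$ does not vanish — since the summands have pairwise distinct degrees and all the coefficients $1/j!$ are positive, no cancellation of the top-degree term can occur, so the degree is exactly $\kappa$ rather than merely at most $\kappa$.
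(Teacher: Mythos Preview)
Your proposal is correct and follows essentially the same route as the paper: the corollary is stated there with its justification inline, reducing directly to \cref{prop:totalCalls}, splitting on whether $\min(\kappa,n)$ equals $n$ or $\kappa$, invoking \eqref{eq:BinomIdentity} for the first case, and observing that $\binom{n}{j}$ is a degree-$j$ polynomial in $n$ for the second. Your added remark that the leading coefficient $1/\kappa!$ cannot cancel (so the degree is exactly $\kappa$ rather than at most $\kappa$) is a small refinement the paper leaves implicit.
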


\noindent Evidently, the number of calls, and correspondingly the computational cost of the cycle, grows as $\kappa$ is increased until $\kappa = n$. On the other hand, as we shall demonstrate in our numerical tests, the convergence rate generally improves as $\kappa$ is increased. Our aim will therefore be to choose $\kappa$ that yields the best trade-off in terms of overall run time.  

\Cref{cor:CycleCallComplexity} leads us to the next formal observation on the linear complexity of the $\kappa$-cycle under suitable assumptions.

\begin{proposition} \label{prop:LinearComplexity}
Assume that the number of operations on the first (finest) level of the $\kappa$-cycle is linear in the number of variables $N$, that is, bounded from above by $CN$ as $N \rightarrow \infty$ for some constant $C$. Assume also that the number of operations per level is a monotonically decreasing function of the level $\ell$, and in fact the number of operations on any level $\ell$ is bounded from above by $c$ times the number of operations on the next-finer level $\ell-1$, for $\ell = 2, \ldots , n$, where $c < 1$ is a constant. Finally, assume that the number of operations per call to the coarsest level is a constant. Then, for any fixed cycle counter $\kappa$, the total number of operations per $\kappa$-cycle is $O(N)$ as $N \rightarrow \infty$. 
\end{proposition}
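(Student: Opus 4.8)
The plan is to account for the total operation count one level at a time. In a single $\kappa$-cycle the routine is invoked $levelCalls(\kappa,\ell)$ times on level $\ell$, and each such invocation performs some number $W_\ell$ of operations that are ``local'' to level $\ell$ (the pre- and post-relaxations, residual computation, restriction and prolongation), not counting the work delegated to the recursive calls. Hence the total number of operations in one cycle equals $\sum_{\ell=1}^{n} levelCalls(\kappa,\ell)\,W_\ell$, and the goal is to bound this by a constant depending only on $\kappa$, $c$ and $C$ — independent of $n$ and $N$ — times $N$.

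The first step is to control the per-level work $W_\ell$. Iterating the assumed bound $W_\ell \le c\,W_{\ell-1}$ down from $W_1 \le CN$ gives $W_\ell \le C c^{\ell-1} N$ for every $\ell = 1,\dots,n$; this already covers the coarsest level, so the separate assumption that the coarsest-level work is a constant is not needed here (it merely sharpens the bound on $W_n$). The second step is to control $levelCalls(\kappa,\ell)$ using \cref{prop:levelCalls} and \cref{cor:levelCalls}: adopting the convention $\binom{a}{b}=0$ for $b>a$, one has, for every $\ell\ge 1$, $levelCalls(\kappa,\ell)=\sum_{j=0}^{\kappa-1}\binom{\ell-1}{j}$, and for fixed $\kappa$ this is a polynomial in $\ell$ of degree $\kappa-1$ (each $\binom{\ell-1}{j}$ being a degree-$j$ polynomial in $\ell$).

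Combining the two bounds, the total work is at most $CN\sum_{\ell=1}^{n}\bigl(\sum_{j=0}^{\kappa-1}\binom{\ell-1}{j}\bigr)c^{\ell-1}\le CN\sum_{j=0}^{\kappa-1}\sum_{m=0}^{\infty}\binom{m}{j}c^{m}$, where all terms are nonnegative so extending to $n=\infty$ is legitimate. The inner sums can be evaluated in closed form via the standard generating-function identity $\sum_{m\ge 0}\binom{m}{j}x^{m}=x^{j}/(1-x)^{j+1}$ for $|x|<1$, giving $c^{j}/(1-c)^{j+1}$; summing the finitely many values $j=0,\dots,\kappa-1$ produces a finite constant $K(\kappa,c)=\frac{1}{1-c}\sum_{j=0}^{\kappa-1}\bigl(\frac{c}{1-c}\bigr)^{j}$. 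Therefore the total number of operations per $\kappa$-cycle is at most $C\,K(\kappa,c)\,N=O(N)$, uniformly in the number of levels.

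The only genuine obstacle is the convergence of the sum over levels: the per-level work decays geometrically with ratio $c<1$, while the per-level number of calls grows, and one must check that this growth is slow enough to be dominated. This is exactly where the $\kappa$-cycle structure is essential — for a fixed cycle counter the call count is only polynomial in $\ell$, and polynomial growth is beaten by geometric decay for every $c<1$, so the series converges regardless of how close $c$ is to $1$; by contrast, for a $W$-cycle ($\kappa\ge n$) the call count is $2^{\ell-1}$ and the series diverges once $c\ge 1/2$. A secondary, minor point worth stating explicitly is that the bound $W_\ell\le C c^{\ell-1}N$ holds on every level, including the coarsest, so no special-case treatment of the bottom of the hierarchy is required.
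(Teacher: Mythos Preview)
Your proof is correct and follows the same overall strategy as the paper's sketch: bound the total work by $CN\sum_{\ell=1}^{n}c^{\ell-1}\,levelCalls(\kappa,\ell)$ and show that this sum is bounded independently of $n$. The execution differs in a useful way. The paper splits the sum at $\ell=\kappa$, handling the first part as $\sum_{\ell=1}^{\kappa}(2c)^{\ell-1}$ and the second as a geometric series weighted by a polynomial of degree $\kappa-1$, which it then bounds by alluding to integration by parts without writing out the details. You instead adopt the convention $\binom{a}{b}=0$ for $b>a$, interchange the order of summation, and evaluate each inner sum in closed form via the generating-function identity $\sum_{m\ge 0}\binom{m}{j}x^{m}=x^{j}/(1-x)^{j+1}$. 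This avoids the case split and yields an explicit constant
\[
K(\kappa,c)=\frac{1}{1-c}\sum_{j=0}^{\kappa-1}\Bigl(\frac{c}{1-c}\Bigr)^{j}
=\frac{1}{1-2c}\Bigl[1-\Bigl(\frac{c}{1-c}\Bigr)^{\kappa}\Bigr]
\quad (c\neq\tfrac12),
\]
which is exactly the factor $f(\kappa,c)$ that the paper derives later, in \cref{prop:n_ops}, by a separate double induction. So your route is both cleaner and more informative: it makes the linear-complexity constant explicit and, as a bonus, recovers the operation-count formula of \cref{prop:simple_n_ops} directly. Your remark that the constant-coarsest-level assumption is superfluous for the upper bound is also correct.
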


Note that this is in contrast to the $W$-cycle where, for example, even for $c = 0.5$ the total number of operations may be as high as $CN \log_2 N$. In particular, this occurs when the $W$-cycle is applied in geometric multigrid employing semi-coarsening for 2D problems.

\begin{proof}[Sketch of Proof]
The total number of operations in a single cycle is bounded by
$$
totalOps \leq CN \left( \sum_{\ell = 1}^n c^{\ell-1}levelCalls(\kappa,\ell)\right) \, .
$$

\noindent
Assuming $n > \kappa$ (else the result is obvious), we obtain by \cref{prop:levelCalls}
\begin{eqnarray*}
totalOps & \leq & CN \left(\sum_{\ell = 1}^{\kappa} c^{\ell-1} \sum_{j = 0}^{\ell-1} \binom{\ell-1}{j} + \sum_{\ell = \kappa + 1}^{n} c^{\ell-1} \sum_{j = 0}^{\kappa-1} \binom{\ell-1}{j}\right) \\ & \leq & CN \left( \sum_{\ell = 1}^{\kappa} (2c)^{\ell-1} + \sum_{\ell = \kappa + 1}^{\infty} c^{\ell-1} Poly_{\kappa-1}(\ell) \right)  \, ,
\end{eqnarray*}

\noindent where $Poly_{\kappa-1}$ is a polynomial of degree $\kappa - 1$. Observe that both terms in the final brackets are equal to some constants independent of $N$ or $n$, and therefore $totalOps$ is indeed linear in $N$. The final term can be bounded, e.g., by dominating it with an appropriate integral and performing multiple integration by parts. We omit the remaining details.
\end{proof}

\subsection{Number of operations in a geometric multigrid \texorpdfstring{{$\kappa$}}{k}-cycle}

\cref{prop:LinearComplexity} is mainly of academic interest, as the undetermined constants might be very large in some cases. In this subsection we focus on the practical determination of the number of operations performed in a $\kappa$-cycle for the case of geometric multigrid, where the coarsening factor is (exactly or nearly) a level-independent constant. This result will provide us with a practical tool in the next section. Because we will be using induction over levels, starting from the coarsest, in this subsection we reverse our convention and number the levels 1 to $n$ from coarsest to finest.  

Consider a $\kappa$-cycle with $n$ levels, where the number of variables on the coarsest level is a constant $N_1$  and the coarsening factor is a constant $c\in(0,1)$. Thus, the number of variables on the second-coarsest level is $N_2=c^{-1}N_1$,  and so on until the finest level where the number of variables is $N_n=c^{1-n}N_1$. Assume further that the number of operations performed at any level but the coarsest when the cycle routine is called is $C$ times the number of variables on that level, that is, $CN_n$ operations on the finest level, $CN_{n-1}=cCN_n$ on the second finest level, and so on until the second coarsest level, whereas on the coarsest level the problem is solved directly at a cost of $\tilde{C}N_1$ operations for some given constant $\tilde{C}$ which may be different from $C$. Then, the following proposition provides us with an exact formula for the total number of operations in a single $\kappa$-cycle.

\begin{proposition}\label{prop:n_ops}
The total number of operations in a single $\kappa$-cycle with $n$  levels is given by

\begin{equation} \label{eq:n_ops}
N^n_{ops}(\kappa, c)=f(\kappa, c)CN_n+P_{\kappa,c}(n) \, ,
\end{equation}
where
\begin{equation} \label{eq:n_ops_f}
f(\kappa, c)=\left\{ 
\begin{split}
& \frac{1}{1-2c}\left[1-\left(\frac{c}{1-c}\right)^\kappa\right], \, & if \, c \neq 0.5, \\
& 2\kappa , \, & if \, c=0.5, 
\end{split}
\right.  
\end{equation}
with (corresponding to a $W$-cycle) $f(\infty, c)=\frac{1}{1-2c}$ if $c<0.5$, while for $c\geq0.5$, $f(\infty, c)$ is undefined and $N^n_{ops}(\infty, c)$ has superlinear complexity. In (\ref{eq:n_ops}),
\begin{eqnarray*}
P_{\kappa,c}(n) & = & \sum_{j=0}^{min(\kappa-1,n-1)}[\tilde{C}-f(\kappa-j,c)C]N_1\binom{n-1}{j} \\
& \leq & \sum_{j=0}^{min(\kappa-1,n-1)} \left[ \tilde{C}-f(1,c)C \right] N_1\binom{n-1}{j} \\ & = & \left(\tilde{C}-\frac{C}{1-c}\right)N_1\sum_{j=0}^{min(\kappa-1,n-1)}\binom{n-1}{j}.
\end{eqnarray*}
\end{proposition}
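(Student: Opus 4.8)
The plan is to establish the equality \eqref{eq:n_ops} by induction on the number of levels $n$ — in the same spirit as the proofs of \cref{prop:levelCalls,prop:totalCalls} — and then to deduce the two displayed inequalities from a monotonicity property of $f(\cdot,c)$. The engine of the induction is a one-step recursive decomposition read directly off Algorithm \ref{alg:kappa_cycle}: on the finest level (level $n$ in the present coarsest-to-finest numbering) the routine performs $CN_n$ operations and then issues a $\kappa$-cycle on the $n-1$ coarser levels, followed (only if $\kappa>1$) by a $(\kappa-1)$-cycle on those same $n-1$ levels. Since the finest of those $n-1$ levels carries $N_{n-1}=cN_n$ variables, and adopting the harmless convention $N^{m}_{ops}(0,c)\equiv 0$ (no call is ever made with a non-positive counter), this reads
\begin{equation*}
N^{n}_{ops}(\kappa,c)=CN_n+N^{n-1}_{ops}(\kappa,c)+N^{n-1}_{ops}(\kappa-1,c)\,.
\end{equation*}
The base case $n=1$ is immediate: the coarsest problem is solved directly at cost $\tilde C N_1$, while the right-hand side of \eqref{eq:n_ops} collapses to $f(\kappa,c)CN_1+\bigl[\tilde C-f(\kappa,c)C\bigr]N_1=\tilde C N_1$ because $P_{\kappa,c}(1)=\bigl[\tilde C-f(\kappa,c)C\bigr]N_1$, the $\kappa$-dependence cancelling.

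For the inductive step I would substitute the induction hypothesis (with finest-level size $N_{n-1}=cN_n$) into the decomposition to get $N^{n}_{ops}(\kappa,c)=\bigl[1+c\,f(\kappa,c)+c\,f(\kappa-1,c)\bigr]CN_n+P_{\kappa,c}(n-1)+P_{\kappa-1,c}(n-1)$, and then match this against \eqref{eq:n_ops}. This reduces the whole proposition to two elementary identities: (a) the functional recurrence $f(\kappa,c)=1+c\,f(\kappa,c)+c\,f(\kappa-1,c)$, and (b) the Pascal-type recurrence $P_{\kappa,c}(n)=P_{\kappa,c}(n-1)+P_{\kappa-1,c}(n-1)$. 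Identity (a) is a one-line computation from \eqref{eq:n_ops_f}: for $c\neq 0.5$ one factors $f(\kappa,c)+f(\kappa-1,c)$, uses $\tfrac{c}{1-c}+1=\tfrac{1}{1-c}$ to collapse the geometric part, and simplifies; the case $c=0.5$ is trivial since $f(\kappa,0.5)=2\kappa$. Identity (b) follows by re-indexing the sum defining $P_{\kappa-1,c}(n-1)$ by $j\mapsto j-1$ (so that $f((\kappa-1)-j,c)$ turns into $f(\kappa-j,c)$) and then invoking $\binom{n-2}{j}+\binom{n-2}{j-1}=\binom{n-1}{j}$; as in \cref{prop:levelCalls}, one must split into the two cases $\kappa\le n-1$ and $\kappa\ge n$ to deal with the truncation $\min(\kappa-1,\cdot)$ in the upper summation limits and the resulting boundary terms. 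I expect this bookkeeping in (b) to be the only place requiring genuine care; everything else is routine.

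Finally, for the two displayed bounds I would first show that $f(m,c)$ is strictly increasing in $m$: a direct subtraction using \eqref{eq:n_ops_f} gives $f(m+1,c)-f(m,c)=\tfrac{1}{1-c}\bigl(\tfrac{c}{1-c}\bigr)^{m}>0$ for $c\neq 0.5$ (and $f(m+1,0.5)-f(m,0.5)=2>0$), valid for all $c\in(0,1)$. Hence $f(\kappa-j,c)\ge f(1,c)$ for $0\le j\le\kappa-1$, so each bracket $\tilde C-f(\kappa-j,c)C$ is bounded above by $\tilde C-f(1,c)C$, which yields the first inequality; the closing equality then follows from $f(1,c)=\tfrac{1}{1-c}$ (one checks this holds in both branches of \eqref{eq:n_ops_f}) after pulling the now $j$-independent constant out of the sum. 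The stated behaviour of $f(\infty,c)$ is read off from the closed form: $\bigl(\tfrac{c}{1-c}\bigr)^{\kappa}\to 0$ as $\kappa\to\infty$ when $c<0.5$, giving $f(\infty,c)=\tfrac{1}{1-2c}$, whereas for $c\ge 0.5$ the coefficient of $CN_n$ grows without bound with the number of levels, consistent with the superlinear $W$-cycle complexity already noted after \cref{prop:LinearComplexity}.
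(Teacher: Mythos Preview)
Your proof is correct, but it is organized differently from the paper's. The paper splits the proposition into two lemmas: first it replaces the coarsest-level cost $\tilde C N_1$ by the artificial cost $f(\kappa',c)CN_1$ (where $\kappa'$ is the counter value at the time of the call) and proves, by a double induction on $\kappa$ and $n$, that the total cost is then exactly $f(\kappa,c)CN_n$; second, it counts how many times the coarsest level is visited with each counter value $\kappa-j$ (namely $\binom{n-1}{j}$ times), and obtains $P_{\kappa,c}(n)$ as the aggregate correction $\sum_j[\tilde C-f(\kappa-j,c)C]N_1\binom{n-1}{j}$. Your approach instead verifies the full formula \eqref{eq:n_ops} in one pass by induction on $n$ for all $\kappa$ simultaneously, reducing the step to the functional identity $f(\kappa,c)=1+c\,f(\kappa,c)+c\,f(\kappa-1,c)$ and the Pascal-type identity $P_{\kappa,c}(n)=P_{\kappa,c}(n-1)+P_{\kappa-1,c}(n-1)$. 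Your route is shorter and avoids the auxiliary ``modified coarsest cost'' device, at the price of hiding the conceptual origin of the two terms in \eqref{eq:n_ops}; the paper's decomposition makes it transparent that $f(\kappa,c)CN_n$ is the idealized linear cost and $P_{\kappa,c}(n)$ the exact coarsest-level adjustment. Both arguments ultimately rest on the same one-step recursion $N^{n}_{ops}(\kappa,c)=CN_n+N^{n-1}_{ops}(\kappa,c)+N^{n-1}_{ops}(\kappa-1,c)$ read off Algorithm~\ref{alg:kappa_cycle}, and your treatment of the inequality via the monotonicity of $f(\cdot,c)$ matches the paper's closing remark.
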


Note that $P_{\kappa,c}(n)$ is not necessarily positive. Note also that for $0<c<0.5$ we have $\frac{1}{1-c}=f(1,c) \leq f(\kappa,c) \leq f(\infty,c)=\frac{1}{1-2c}$ so $P_{\kappa,c}(n)$ is bounded from above and below as follows.
$$
\left(\tilde{C}-\frac{C}{1-2c}\right)N_1\sum_{j=0}^{min(\kappa-1,n-1)}\binom{n-1}{j}\leq P_{\kappa,c}(n)\leq\left(\tilde{C}-\frac{C}{1-c}\right)N_1\sum_{j=0}^{min(\kappa-1,n-1)}\binom{n-1}{j}.
$$

We prove this proposition with the aid of two lemmas. The first is a simplification of \cref{prop:n_ops}, where the cost of the coarsest level solution is modified in a way that leaves only the linear term in (\ref{eq:n_ops}). The second lemma yields the correction required for the actual cost of the coarsest level solution in \cref{prop:n_ops}.

\begin{lemma} \label{prop:simple_n_ops}
Consider the $\kappa$-cycle of \cref{prop:n_ops}, modified such that the cost of the coarsest level solution is given by $f(\kappa, c)CN_1$ instead of $\tilde{C}N_1$, where the argument $\kappa$ is the cycle counter appearing in the call to the routine on the coarsest level. Then, the claim of \cref{prop:n_ops} holds with the second term eliminated, i.e., 
\begin{equation} \label{eq:simple_n_ops}
N^n_{ops}(\kappa, c)=f(\kappa, c)CN_n \, ,
\end{equation}
with $f(\kappa, c$) as defined in (\ref{eq:n_ops_f}). 
\end{lemma}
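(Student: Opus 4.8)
\textbf{Proof plan for Lemma \ref{prop:simple_n_ops}.}

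The plan is to induct on the number of levels $n$, exploiting the recursive structure of the $\kappa$-cycle and the ``reverse'' level numbering (coarsest $=1$, finest $=n$). The base case $n=1$ is immediate: the cycle consists of a single call to the coarsest level, whose modified cost is $f(\kappa,c)CN_1$, which matches \eqref{eq:simple_n_ops} with $N_n = N_1$. For the inductive step, I would consider a $\kappa$-cycle on $n+1$ levels. Its finest-level work costs $CN_{n+1}$ operations, after which it makes two recursive calls on the sub-hierarchy consisting of the bottom $n$ levels: one call inheriting counter $\kappa$, and (if $\kappa>1$) one call with counter $\kappa-1$. In that sub-hierarchy the finest level has $N_n = cN_{n+1}$ variables, and the coarsest-level cost has been modified exactly as required by the lemma, so the induction hypothesis applies to each of the two calls. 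This gives the recurrence
\begin{equation*}
N^{n+1}_{ops}(\kappa,c) = CN_{n+1} + f(\kappa,c)CN_n + f(\kappa-1,c)CN_n
= CN_{n+1} + \big(f(\kappa,c)+f(\kappa-1,c)\big)\,cCN_{n+1},
\end{equation*}
where the term $f(\kappa-1,c)$ is understood to vanish when $\kappa=1$ (consistent with $f(0,c)=0$, which one checks directly from \eqref{eq:n_ops_f} since the bracket $1-(c/(1-c))^0$ is zero and $2\kappa=0$ at $\kappa=0$).

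The crux is then a purely algebraic identity: I must verify that the quantity $f(\kappa,c)$ defined in \eqref{eq:n_ops_f} satisfies the functional equation
\begin{equation*}
f(\kappa,c) = 1 + c\,\big(f(\kappa,c) + f(\kappa-1,c)\big),
\end{equation*}
for all integers $\kappa\ge 1$ and all $c\in(0,1)$, with the convention $f(0,c)=0$. For $c\neq 0.5$ this is a direct substitution: writing $r = c/(1-c)$ and $s = 1/(1-2c)$, one has $f(\kappa,c) = s(1-r^\kappa)$, and the identity reduces to checking that $s(1-r^\kappa) - cs(1-r^\kappa) - cs(1-r^{\kappa-1}) = 1$; collecting terms and using $s(1-2c)=1$ together with $cs = s - cs - \dots$ — more cleanly, factoring out $s$ and using $1-c-c = 1-2c$ on the constant part and $-r^\kappa + cr^\kappa + cr^{\kappa-1} = r^{\kappa-1}(-r + cr + c) = r^{\kappa-1}(c - r(1-c)) = r^{\kappa-1}(c - c) = 0$ on the geometric part — the identity falls out. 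For $c = 0.5$ it is even simpler: $f(\kappa,0.5)=2\kappa$, and $1 + \tfrac12(2\kappa + 2(\kappa-1)) = 1 + (2\kappa-1) = 2\kappa$. Once this identity is in hand, the recurrence above immediately gives $N^{n+1}_{ops}(\kappa,c) = f(\kappa,c)CN_{n+1}$, closing the induction.

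I expect the main obstacle to be bookkeeping rather than conceptual: one must be careful that the two recursive calls genuinely see the claimed sub-hierarchy (in particular that the coarsest-level modification is inherited with the correct counter argument in \emph{each} call — $\kappa$ in the first, $\kappa-1$ in the second — which is exactly how the lemma's hypothesis is phrased), and that the $\kappa=1$ case, where only one recursive call is made, is handled consistently by the $f(0,c)=0$ convention rather than as a separate case. It is also worth stating explicitly why $f(\infty,c)$ behaves as claimed: letting $\kappa\to\infty$, the term $r^\kappa = (c/(1-c))^\kappa \to 0$ iff $c<0.5$, giving $f(\infty,c) = 1/(1-2c)$; for $c\ge 0.5$ the sequence $f(\kappa,c)$ diverges, reflecting the superlinear complexity of the $W$-cycle under semi-coarsening-type ratios. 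This remark is not needed for the induction itself but supports the surrounding discussion in \cref{prop:n_ops}.
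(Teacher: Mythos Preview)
Your argument is correct. Both your proof and the paper's hinge on the same recursive decomposition $N^{n+1}_{ops}(\kappa,c)=CN_{n+1}+N^{n}_{ops}(\kappa,c)+N^{n}_{ops}(\kappa-1,c)$ and on verifying the same algebraic identity for $f$, but you organize the induction differently. The paper uses a \emph{nested double induction}: an outer induction on $\kappa$ (base case $\kappa=1$ handled separately by its own induction over levels), and for the step from $\kappa$ to $\kappa+1$ an inner induction over $n$. You instead run a \emph{single induction on $n$}, with the inductive statement quantified over all $\kappa\ge 1$, and absorb the $\kappa=1$ boundary case via the convention $f(0,c)=0$ (which, as you note, is consistent with \eqref{eq:n_ops_f}). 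Your route is a bit more economical---one induction instead of two, and no separate treatment of $\kappa=1$---while the paper's layout makes the dependence on the outer induction hypothesis (the already-established formula for the smaller counter) more explicit. Either way the computation collapses to the identity $f(\kappa,c)=1+c\big(f(\kappa,c)+f(\kappa-1,c)\big)$, which both proofs verify by direct substitution.
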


\noindent Note that the assumption on the modified cost of the coarsest level solve implies that it is not constant but rather varies within the $\kappa$-cycle.

\begin{proof}
We employ induction over the cycle counter $\kappa$, with the induction step itself proved by an inner induction over the number of levels in the cycle. Note that the modified cost of the coarsest level solution ensures that (\ref{eq:simple_n_ops}) is automatically satisfied on the coarsest level for any $\kappa$. Therefore, when executing the inner induction over levels it only remains to prove the induction step.

\noindent
$\kappa=1$ \emph{inner induction step}: assume that the induction hypothesis holds for a 1-cycle (i.e., $\kappa$-cycle with $\kappa=1$), with $n\geq1$ levels, that is, $N^n_{ops}(1,c)=f(1,c)CN_n=\frac{1}{1-c}CN_n$. Then, by the definition of the $\kappa$-cycle, the induction hypothesis, and the given constant coarsening factor $N_n=cN_{n+1}$, the number of operations in a 1-cycle of $n+1$ levels satisfies 
\begin{eqnarray*}
N^{n+1}_{ops}(1, c) & = & CN_{n+1}+N^{n}_{ops}(1,c)
 =  CN_{n+1}+f(1,c)CN_n \\
& = & CN_{n+1} \left( 1+c\frac{1}{1-c} \right)
 =  \frac{1}{1-c}CN_{n+1} = f(1,c)CN_{n+1} \, .
\end{eqnarray*}
We conclude that the claim holds for $\kappa=1$ and any number of levels.

\noindent
\emph{Outer induction step}: assume that the induction hypothesis holds for a $\kappa$-cycle with given $\kappa \geq 1$ and any number of levels, that is, $N^{n}_{ops}(\kappa,c)=f(\kappa,c)CN_{n}$. For the  ($\kappa+1$)-cycle with $n=1$ the claim holds automatically as noted above, by the assumption on the modified cost of the coarsest level solution. Assume then that the claim holds for the ($\kappa+1$)-cycle and $n\geq1$ levels. Then, by the definition of the $\kappa$-cycle, the induction hypothesis, and the given constant coarsening factor $N_n=cN_{n+1}$, the number of operations in a ($\kappa+1$)-cycle of $n+1$ levels satisfies for $c \neq 0.5$,
\begin{eqnarray*}
N^{n+1}_{ops}(\kappa+1, c) & = & CN_{n+1} + N^{n}_{ops} (\kappa+1,c) + N^{n}_{ops}(\kappa,c) \\
& = & CN_{n+1} + f(\kappa+1,c) CN_{n} + f(\kappa,c)CN_{n} \\
& = & CN_{n+1} \left(1+\frac{c}{1-2c} \left[1 - \left(\frac{c}{1-c} \right)^{\kappa+1} + 1 - \left( \frac{c}{1-c} \right)^\kappa \right] \right) \\
& = & CN_{n+1} \left( 1 + \frac{2c}{1-2c} - \frac{c}{1-2c} \left[ \left( \frac{c}{1-c} \right)^{\kappa+1} + \left( \frac{c}{1-c} \right)^\kappa \right] \right) \\
& = & CN_{n+1} \frac{1}{1-2c} \left(1-c \left( \frac{c}{1-c} \right)^{\kappa+1} \left( 1+\frac{1-c}{c} \right) \right) \\
& = & CN_{n+1} \frac{1}{1-2c} \left(1 - \left( \frac{c}{1-c} \right)^{\kappa+1} \right) = f(\kappa+1,c)CN_{n+1}.
\end{eqnarray*}

\noindent
For $c=0.5$ we have
\begin{eqnarray*}
N^{n+1}_{ops}(\kappa+1, c) & = & CN_{n+1} + N^{n}_{ops} (\kappa+1,c) + N^{n}_{ops}(\kappa,c) \\
& = & CN_{n+1} + 2(\kappa+1)CN_{n} + 2\kappa CN_{n} \\
& = & CN_{n+1} \left[1 + 0.5(4\kappa+2) \right] \\ & = & 2(\kappa+1) CN_{n+1} = f(\kappa+1,c)CN_{n+1},
\end{eqnarray*}
completing the proof.
\end{proof}

To complete the proof of \cref{prop:n_ops}, we need to subtract off the modified coarsest level costs of Lemma \ref{prop:simple_n_ops}, and add the corresponding coarsest level costs of the proposition. For this we need to derive how many times the routine is called at the coarsest level with each value of the cycle counter. This is done in the following lemma.

\begin{lemma} \label{lem:CoarseLevelCorrection} 
In a single complete $\kappa$-cycle with $n$ levels, the routine is called $\binom{n-1}{j}$ times at the coarsest level with cycle counter $\kappa-j$, for $j=0,...,min(\kappa-1,n-1)$.
\end{lemma}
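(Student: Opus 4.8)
The plan is to prove this by induction on the number of levels $n$, tracking not just the total number of coarsest-level calls but the full distribution of cycle-counter values appearing in those calls. Define $coarseCalls(\kappa, n, j)$ to be the number of times the routine is invoked at the coarsest level during a single $\kappa$-cycle with $n$ levels, where the cycle counter in that invocation equals $\kappa - j$. The claim is that $coarseCalls(\kappa, n, j) = \binom{n-1}{j}$ for $0 \le j \le \min(\kappa-1, n-1)$, and that no coarsest-level call occurs with a counter $\le 0$ (which is automatic from Algorithm~\ref{alg:kappa_cycle}, since the second recursive call is skipped unless the reduced counter is still positive).

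For the base case $n = 1$, the coarsest level is the finest level, the routine is called exactly once, with counter $\kappa$, i.e. $j = 0$; and $\binom{0}{0} = 1$, matching. For the induction step, I would use the same structural observation exploited in the proof of \cref{prop:levelCalls}: a single $\kappa$-cycle with $n+1$ levels makes, at level $2$, two recursive calls — one with counter $\kappa$ and one with counter $\kappa-1$ (the latter only if $\kappa > 1$) — and each of these is an $n$-level cycle whose coarsest level coincides with the coarsest level of the original cycle. Hence
$$
coarseCalls(\kappa, n+1, j) = coarseCalls(\kappa, n, j) + coarseCalls(\kappa-1, n, j-1),
$$
where the second term accounts for the shift: a call with counter $(\kappa-1) - (j-1) = \kappa - j$ inside the reduced subcycle corresponds to index $j$ relative to the original counter $\kappa$. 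Substituting the induction hypothesis turns the right-hand side into $\binom{n-1}{j} + \binom{n-1}{j-1}$, which equals $\binom{n}{j}$ by Pascal's rule \eqref{eq:BinomIdentity}, completing the step.

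The one place needing genuine care is the boundary behavior of the index ranges, which is exactly where the $\min(\kappa-1, n-1)$ truncation interacts with the recursion. I would check the two regimes separately, mirroring the case split in \cref{prop:levelCalls}: when $\kappa \le n$, the term $coarseCalls(\kappa-1,n,j-1)$ contributes only for $j-1 \le \min(\kappa-2, n-1) = \kappa - 2$, i.e. $j \le \kappa-1$, which is consistent with $\min(\kappa-1,n)$; the edge case $j = \kappa - 1$ picks up only the $\binom{n-1}{j-1}$ summand but since $\binom{n-1}{\kappa-1}$ may still be present one must verify Pascal's rule still delivers $\binom{n}{\kappa-1}$ (it does, reading $\binom{n-1}{\kappa-1} + \binom{n-1}{\kappa-2}$). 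When $\kappa \ge n+1$, both binomial sums run over their full range and one uses $\binom{n-1}{j} + \binom{n-1}{j-1} = \binom{n}{j}$ together with the conventions $\binom{n-1}{n} = 0$, recovering $\binom{n}{j}$ for $0 \le j \le n = \min(\kappa-1, n)$. I expect this bookkeeping — confirming the stated index range $j = 0,\dots,\min(\kappa-1,n-1)$ is exactly what the Pascal recursion produces at each step, with no spurious terms at the endpoints — to be the only real obstacle; the algebraic core is a one-line application of \eqref{eq:BinomIdentity}.

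One should also note, as a sanity check consistent with the rest of the paper, that summing the claim over $j$ gives $\sum_{j=0}^{\min(\kappa-1,n-1)} \binom{n-1}{j} = levelCalls(\kappa, n)$ by \cref{prop:levelCalls} — exactly the number of calls at level $n$, as it must be — which both validates the statement and suggests an alternative derivation: Lemma~\ref{lem:CoarseLevelCorrection} could instead be obtained by refining the proof of \cref{prop:levelCalls} to carry the counter value along, though the direct induction above is cleaner.
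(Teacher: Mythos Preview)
Your proposal is correct and follows essentially the same approach as the paper: induction on the number of levels, using the decomposition of a $\kappa$-cycle into a $\kappa$-subcycle and a $(\kappa-1)$-subcycle one level down, together with Pascal's identity \eqref{eq:BinomIdentity}. The paper gives only a sketch and starts the induction at $n=2$ (treating $\kappa=1$ separately), whereas you start at $n=1$ and are more careful with the endpoint bookkeeping, but the argument is the same.
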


\noindent
Note that the total number of calls at the coarsest level agrees with the result of Proposition \ref{prop:levelCalls}, but there we did not track the cycle counter at each call.

\begin{proof}[Sketch of Proof] 
For $\kappa=1$ the result is obvious, the routine is called just once on the coarsest level with cycle counter 1. For $\kappa>1$ we apply induction over the number of levels. For $n=2$ the routine is called on the coarsest level once with cycle counter $\kappa$ and once $\kappa-1$, corresponding to $j=0,1$, respectively, so the claim is satisfied. Assume now that the claim is satisfied for a given $\kappa\geq2$ and $n\geq2$ levels. By the definition of the $\kappa$-cycle, at level $n+1$ we call the routine recursively at level $n$, once with cycle counter $\kappa$ and once with $\kappa-1$. The result follows from the induction hypothesis and the fact that for $j>0$ we have
$$
\binom{n}{j}=\binom{n-1}{j}+\binom{n-1}{j-1},
$$
whereas for $j=0$
$$
\binom{n}{j}=\binom{n-1}{j}=1,
$$
leading to the stated result.
\end{proof}

These two lemmas prove \cref{prop:n_ops}, with the final inequality in this proposition resulting from the fact that $f(\kappa,c)$ is positive and monotonically increasing for any positive $\kappa$ and $c \in (0,1)$.

We remark, finally, that in our tests in \cref{sec:timePrediction} $c=0.25$, and the coarsest level solution is very cheap, so $P_{\kappa,c}$ is negligible compared to $f(\kappa,c)CN_n$ except for very small problems. Therefore, the formula of Lemma \ref{prop:simple_n_ops} provides us with an accurate value for $N_{ops}$.

\section{Predicting \texorpdfstring{{$\kappa$}}{k}-cycle run-time on GPU processors} \label{sec:timePrediction}

In this section we introduce a very simple model for predicting the approximate run-time of a single $\kappa$-cycle on GPU processors. When using GPUs for the computation, the CPU launches GPU-specific functions for execution on the GPU. These are called GPU kernels. As discussed in \cite{DBLP:conf/ics/VernerSS11}, the throughput of GPU kernels is in general not linear with the input size, and the accurate prediction of a GPU kernel runtime is quite complicated. However, we show that in our particular case the run-time per cycle can easily be calculated in advance fairly accurately, and therefore this simple predictive tool may be useful in choosing the best cycle for a given problem on a given system.

The model bases the run-time prediction on two system-dependent factors, and thus also gives an indication of the relative efficiency of different systems with respect to $\kappa$-cycles. All computations in our tests are done on GPU processors. The comparison includes the standard $V$, $F$ and $W$-cycles as the special cases $\kappa=1,2$ and $\kappa=\infty$, respectively. In addition, the cases of $\kappa=3$ and $\kappa=4$, which do not correspond to any previously known multigrid cycle, are included.

In our code, each relaxation, restriction, prolongation, etc., is implemented as a GPU kernel, and the number of GPU kernel calls can be calculated in advance. Before calling the cycle routines, the CPU prepares the problem. The data are then copied once from the CPU memory to the GPU memory, and after that all the data reside in the GPU memory and all the computations are done by the GPU.

\subsection{The model}

Here, a simple model for the run-time of a single cycle is presented. The model predicts the approximate run-time of one cycle, given the number of unknowns on the finest level and the cycle counter $\kappa$. The model assumes that the run-time for one cycle is a linear combination of the number of GPU kernel launches and the number of operations done by the kernels (which may include memory accesses, as well as arithmetic calculations), i.e.,  

\begin{equation} \label{eq:LinearCombination}
T=\alpha\cdot N_{gpuCalls}+\beta\cdot N_{ops}.
\end{equation}

\noindent
Here, $\alpha$ and $\beta$ depend on the system (specific CPU, GPU, OS, etc.), and on particular problem and cycle properties such as the discrete operator, coarsening factor, relaxation type, number of relaxations, prolongation and restriction operation, etc. They do not depend on the number of levels and on $\kappa$, as these dependencies are included in $N_{gpuCalls}$ and in $N_{ops}$. Once the parameters of the cycle are fixed, $\alpha$ and $\beta$ will have the same approximate values for any number of levels $n$ and any cycle counter $\kappa$.

$N_{gpuCalls}$ is the number of GPU kernel launches per cycle, which we can calculate. In our implementation, there is one GPU kernel launch in each routine call at the coarsest level, and a fixed number of GPU kernel launches in each routine call on the finer levels, which depends on the number of relaxations. Using this knowledge together with the $totalCalls(\kappa, n)$ value from \cref{sec:TheKappaCycle}, we can predict the total number of GPU kernel launches. $N_{ops}$ is a measure for the total amount of GPU operations, such as memory reads/writes and arithmetic operations. We show how to compute this value in our particular example in (\ref{eq:n_ops_2D}). A summary of all the formulas is given in \cref{table:notations}.

This model is based on a simplified assumption that each GPU kernel has an overhead which takes a constant time per launch, and that the time required for the kernel itself is in direct proportion to the amount of operations it does, dominated by memory reads and writes in our case, except for very small problems. Because the cycle routine on each level but the coarsest has the same GPU kernel launches (same number and same GPU functions), the total memory reads and writes are in direct proportion to the number of variables in that level, and therefore the time spent by GPU kernels is also proportional to this number. CPU time is not considered in the model, because the CPU execution is parallel to GPU execution and CPU time is much smaller. Note that in practice these operations are affected by caching, operation latencies and other considerations.

Below, we present results of numerical tests performed in order to assess this model, and we show that our model is able to usefully predict the actual run-time per cycle. We therefore conclude that our simplifications are reasonable. Note that our tests show that the bottleneck of our kernels are the memory transfers and not the computations themselves, but the amount of memory transferred and the amount of computations are both in direct proportion to the number of unknowns, so the same model would be valid either way.

In the tests, the problem of 2D rotated anisotropic diffusion with constant coefficients is solved. The problem is discretized on a square grid employing a nine-point finite-difference stencil. More details are provided in the next section. Only a stand-alone $\kappa$-cycle is considered in this section. We use standard coarsening, so the coarsening factor is approximately $c=0.25$. Two pre-relaxations and two post-relaxations are employed in this section, and we use a maximum of 13 levels, which implies $(2^{13}-1)^2 = 8191^2 = 67,108,864$ unknowns on the finest level. We use double precision floats, which are 8 bytes each. For each level there are 3 pre-allocated arrays: current estimation, right-hand side, and temporary values. 

\Cref{fig:cycle_times}(a) shows the mean measured times for one cycle with $\kappa=1,2,3,4$ and $\kappa=\infty$, for 4-13 levels. The vertical lines in the graph show where the operation times become equal to the overhead times, referred to as ``turning points'' below.

\subsection{Estimating the model parameters}

In order to approximate the constants $\alpha$ and $\beta$ in our system \eqref{eq:LinearCombination}, the simple least-squares problem \eqref{eq:MinProblem} is solved for $\alpha$ and $\beta$. 

\begin{equation} \label{eq:MinProblem}
(\alpha,\beta) = argmin_{\tilde{\alpha},\tilde{\beta}} \sum_{\kappa=1,2,3,4,\infty}\sum_{n=4}^{13}(\tilde{\alpha}\cdot N_{gpuCalls}(\kappa,n)+\tilde{\beta}\cdot N_{ops}(\kappa,n)-T(\kappa,n))^2,
\end{equation}

\noindent
where $T(\kappa,n)$ is the measured time for one $\kappa$-cycle (averaged over 200 runs), $N_{gpuCalls}$ is the number of GPU kernel launches in one cycle, and $N_{ops}(\kappa,n)$ is proportional to the total number of memory accesses. (The exact number depends on the number and type of the relaxations and other factors, which we assume to be constant and represented in $\beta$). They, in turn, are proportional to the number of unknowns. $N_{gpuCalls}$ and $N_{ops}$ are computed according to \cref{table:notations}. In our main test system, the values of $\alpha$ and $\beta$ were found to be $\alpha=2.48\cdot 10^{-3}$ ms, $\beta=1.18\cdot 10^{-6}$ ms. These two model parameters suffice to predict fairly accurate run-times for any $\kappa$ and problem size, as seen in \cref{fig:cycle_times}(b-f).

Actual mean run-times for a single cycle with $\kappa=1,2,3,4,\infty$, are shown in black in \cref{fig:cycle_times}(b-f). The times predicted by \eqref{eq:LinearCombination} are plotted in blue. All graphs use the values of $\alpha$ and $\beta$ as stated above. The red curves show the call cost component of \eqref{eq:LinearCombination}, while the green curve shows the computation cost component, hence, the blue curve is the sum of the red and the green curves. As the number of levels $n$ increases, the cost of operations grows faster than the cost of calls, so for larger $\kappa$'s the operation costs catch up with the overhead at larger problems (at the so-called turning point, where the green and red curves cross in \cref{fig:cycle_times}). This is due to the fact that larger $\kappa$ implies more calls at coarse levels.

\begin{figure}[hpbt!]
\centering
\subfigure[Measured times for $\kappa$-cycles on the GPU.]{
\includegraphics[scale=0.8]{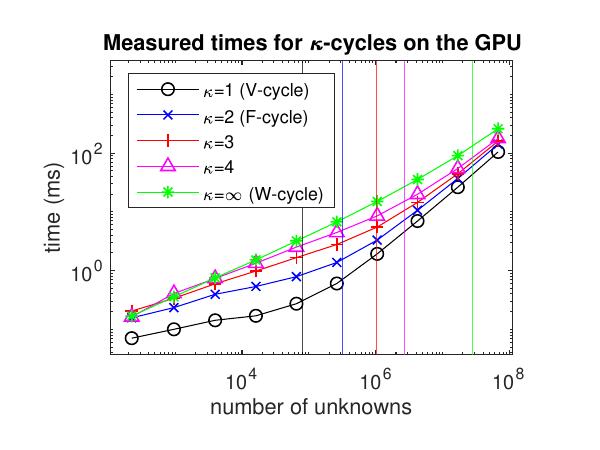}}
\subfigure[$V$-cycle times and predictions.] {\includegraphics[scale=0.8]{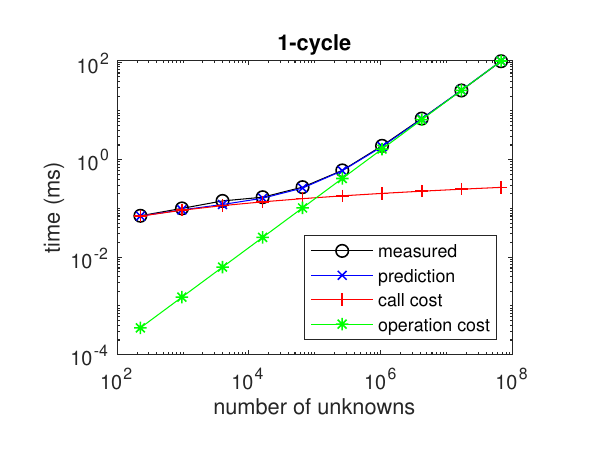}}
\subfigure[$F$-cycle times and predictions.] {\includegraphics[scale=0.8]{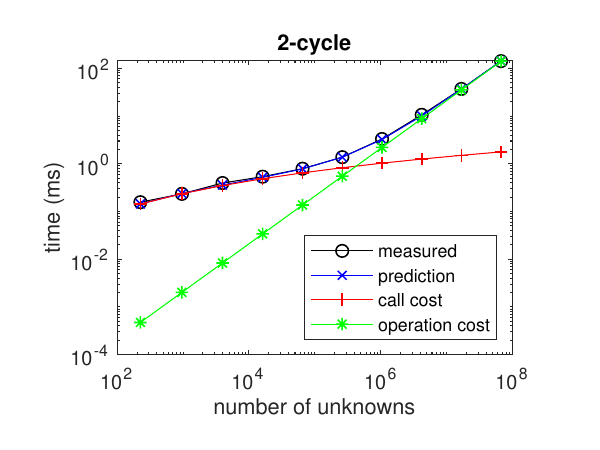}}
\subfigure[$\kappa$-cycle times and predictions for $\kappa=3$.] {\includegraphics[scale=0.8]{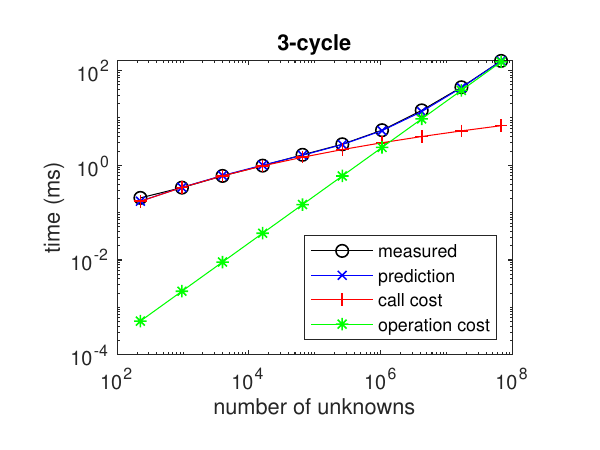}}
\subfigure[$\kappa$-cycle times and predictions for $\kappa=4$.] {\includegraphics[scale=0.8]{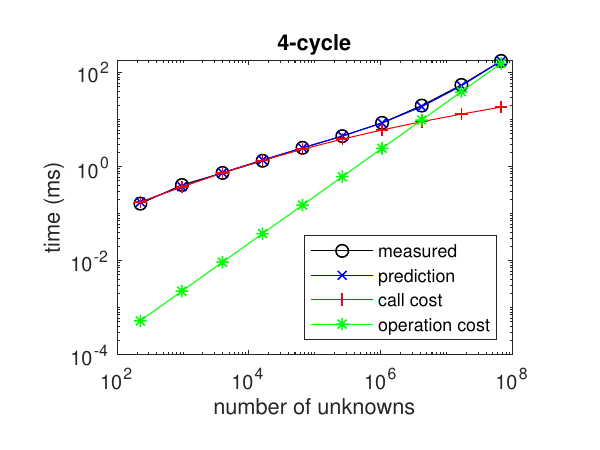}}
\subfigure[$W$-cycle times and predictions.] {\includegraphics[scale=0.8]{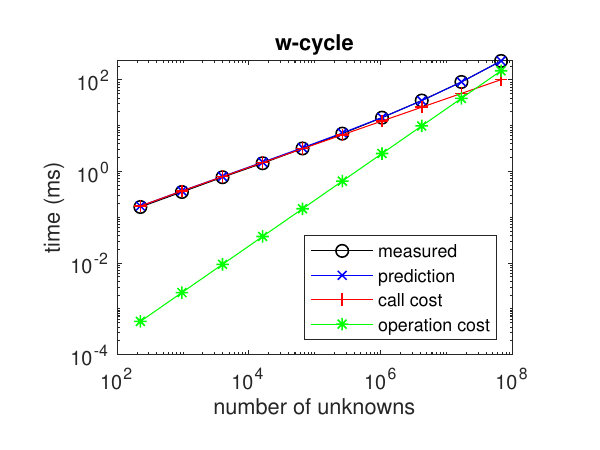}}
\caption{Measured and predicted run-times per cycle for $\kappa=1,2,3,4,\infty$. The vertical lines in panel (a) show the turning points, where red and green curves cross in panels (b)-(f).}
\label{fig:cycle_times}
\end{figure}

\subsection{Values for \texorpdfstring{{$N_{gpuCalls}$ and $N_{ops}$}}{NgpuCalls and Nops} in our implementation}
\label{sec:n_comp}

There are $5$+$\nu$ GPU kernel calls per cycle routine call, where $\nu=\nu_1+\nu_2$ is the total number of relaxations, except at the coarsest level, where there is one GPU kernel launch, as seen in Algorithm \ref{alg:launches}. The number of GPU kernel launches per cycle is calculated as shown in \cref{table:notations}.
\begin{algorithm2e} \label{alg:launches}
\DontPrintSemicolon
\label{alg:kappa_cycle_kernels}
\caption{Kernel launches in the $\kappa$-cycle}
{$v^\ell \leftarrow \kappa\mbox{-}cycle(v^\ell,f^\ell,A^\ell,\ell,n,\kappa)$}\;
\Indp
\nl
If $\ell == n$ (coarsest level), solve $A^\ell v^\ell = f^\ell$ and return. - \textbf{1 kernel launch on coarsest level}\;
\nl
{\em Relax} $\nu_1$ times on $A^\ell u^\ell = f^\ell$ with initial guess $v^\ell$. - \textbf{$\nu_1$ kernel launches}\;
\nl
$f^{\ell+1} \leftarrow Restrict(f^\ell - A^\ell v^\ell)$. - \textbf{$2$ kernel launches}: 1 residual + 1 restrict\;
\nl
$v^{\ell+1} \leftarrow 0$. - \textbf{$1$ kernel launches}: make the vector zero\;
\nl
$v^{\ell+1} \leftarrow \kappa\mbox{-}cycle(v^{\ell+1},f^{\ell+1},A^{\ell+1},\ell+1,n,\kappa)$.\;
\nl
If $\kappa > 1$\;
$v^{\ell+1} \leftarrow \kappa\mbox{-}cycle(v^{\ell+1},f^{\ell+1},A^{\ell+1},\ell+1,n,\kappa - 1)$.\;
\nl
$v^\ell \leftarrow v^\ell + Prolong(v^{\ell+1})$. - \textbf{$2$ kernel launches}: 1 prolong + 1 vector addition\;
\nl
{\em Relax} $\nu_2$ times on $A^\ell u^\ell = f^\ell$ with initial guess $v^\ell$. - \textbf{$\nu_2$ kernel launches}\;
\end{algorithm2e}

Regarding $N_{ops}$, the last level in our tests has only one unknown ($N_1=1$). Therefore, $P_{\kappa,c}(n)$ is negligible compared to $f(\kappa, c)CN_n$ in (\ref{eq:n_ops}), and we can use (\ref{eq:simple_n_ops}). Also, when using standard coarsening for 2D problems, the coarsening factor is very close to $c=0.25$. Using $c=0.25$ in (\ref{eq:simple_n_ops}) yields:

\begin{eqnarray}
\label{eq:n_ops_2D}
N^n_{ops}(\kappa, 0.25) & = & f(\kappa, 0.25)CN_n  = \\ & & \frac{1}{1-2\cdot0.25}\left[1-\left(\frac{0.25}{1-0.25}\right)^\kappa\right]CN_n = 2\left(1-\frac{1}{3^\kappa}\right)CN_n \, . \nonumber
\end{eqnarray}

\subsection{Calculating the turning point}

The problem size at the turning point, where the overhead time and computation time are equal, is denoted $N_{tp}$. This notion is relevant because the number of operations grows linearly with the number of unknowns, hence, much faster than the growth due to calls (compare the slopes of the red and green curves in \cref{fig:cycle_times}). The turning point, where the green and red curves cross each other, provides an indication of the approximate problem-size for which the $\kappa$-Cycle enjoys its maximal relative advantage in terms of run-time, compared to the cycle obtained when $\kappa$ is increased by one.

Evidently, the turning point does not depend on $\alpha$ and $\beta$ separately, but rather only on their ratio:

\begin{equation} \label{eq:equalCosts}
\alpha\cdot N_{gpuCalls}=\beta\cdot N_{ops} \implies N_{ops}=\frac{\alpha}{\beta}\cdot N_{gpuCalls}
\end{equation}

\noindent
For the values $\alpha=2.48\cdot 10^{-3}$, $\beta=1.18\cdot 10^{-6}$ of our system, $N_{tp}$ and corresponding (unrounded) finest levels for various $\kappa$'s are shown in \cref{table:equal_times}. Details on how we calculate these numbers are shown in the appendix.

\begin{table}
\centering
\caption{The approximate turning point and corresponding unrounded finest level}
\begin{tabular}{ |c|c|c| }
\hline
 $\kappa$ & $n_{tp}$ & $N_{tp}$ \\  
\hline
 1 ($V$-cycle) & 8.2 & 80,000 \\
\hline
 2 ($F$-cycle) & 9.1 & 320,000 \\
\hline
 3 & 10.0 & 1,000,000 \\
\hline
 4 & 10.7 & 2,700,000 \\
\hline
 $\infty$ ($W$-cycle) & 12.4 &  28,000,000 \\
\hline
\end{tabular} 
\label{table:equal_times}
\end{table} 

We find that when there are up to $10^7$ unknowns (12 levels or less), most of the run-time of a $W$-Cycle is spent on overhead. In a $V$-cycle, in contrast, about $10^5$ unknowns (9 levels) are sufficient for the calculation time to exceed the overhead time.

\subsection{Relative prediction errors} \cref{table:relative_errors} shows the relative errors of the model when compared to actual mean run-times. It can be seen that the relative prediction errors are just a few percent, and the prediction is especially accurate for large run-times. We explain the somewhat lower accuracy for shorter runs by noticing that for seven levels or less the required data fit entirely into the GPU cache: the GPU in our main system has L2 cache of 1.5MB, and the amount of memory required is about $3\cdot 8\cdot 4^{N_{levels}}$ bytes (about $4^{N_{levels}}$ doubles per array, 3 arrays per level, 8 bytes per double). This causes the execution of the kernels to become very fast, and the required time is dominated by the kernel overhead. The proposed model does not account for this phenomenon, but in big problems this has a small effect on the overall time.


\begin{table}
\centering
\caption{Relative run-time prediction errors}
\label{table:relative_errors}
\begin{tabular}{ |c|c|c|c|c|c|c| }
\hline
 $\kappa$ $\setminus$ n & 8 & 9 & 10 & 11 & 12 & 13 \\
\hline
1 &  -4.28\% & -1.68\% & -3.64\% & -1.96\% & 1.55\% & 1.42\% \\
\hline
2 & -0.80\% & 0.05\% & -2.19\% & -5.14\% & -0.50\% & 0.90\% \\
\hline
3 & -2.13\% & -1.35\% & -2.33\% & -5.94\% & -2.18\% & -0.12\% \\
\hline
4 & -0.49\% & 0.08\% & -0.72\% & -5.28\% & -4.38\% & -1.02\% \\
\hline
 $\infty$ & 3.61\% & 3.21\% & 1.07\% & -0.62\% & 0.12\% & 0.30\% \\
\hline
\end{tabular}
\end{table}

\begin{table}
\caption{Notation and formulas}
\begin{tabular}{ |c|c|c|}
\hline
 Symbol & Meaning & Formula  \\  
\hline
 $n$ / $N_{levels}$ & number of levels &  \\
\hline
 $N$ / $N_{unknowns}$ & number of unknowns on the finest level & $(2^{N_{levels}}-1)^2$\\
\hline
 $\kappa$ & {\em cycle counter} & \\
\hline
 $levelCalls(\kappa, \ell)$ & number of routine calls on level $\ell$ per cycle & $\sum_{j=0}^{\min(\kappa-1,\ell-1)}\binom{\ell-1}{j}$ \\
\hline
 $totalCalls(\kappa, n)$ & number of routine calls per cycle & $\sum_{j=1}^{\min(\kappa,n)} \binom{n}{j}$ \\
\hline
$C(\kappa)$ & see formula & $2\left(1-\frac{1}{3^\kappa}\right)$ \\
\hline
$\nu$ & number of relaxations per routine call & $\nu_1 + \nu_2$\\
\hline
$N_{gpuCalls}$ & number of GPU kernel & $(5+\nu)\cdot totalCalls(\kappa,n-1)+$ \\ 
 & launches per cycle & $totalCalls(\kappa, n)-totalCalls(\kappa, n-1)$ \\
\hline
 $N_{ops}$ & measure for the total amount of operations & $N_{unknowns}\cdot2\left(1-\frac{1}{3^\kappa}\right)$ \\
 & done by the GPU in one cycle (see formula) & \\
\hline
\end{tabular}
\label{table:notations}
\end{table}

\section{Numerical results} \label{sec:NumericalTests}

In this section we report some numerical tests and results in greater detail. 
The run-time per cycle is monotonically increasing with $\kappa$ (until $\kappa = n$, as from there on the $\kappa$-cycle coincides with the $W$-cycle). However, a larger $\kappa$ typically results in faster convergence per cycle. Thus, there is a tradeoff in selecting $\kappa$, and the optimal choice is problem-dependent and system-dependent. To demonstrate the potential of the $\kappa$-cycle, we test our algorithms on a 2D rotated anisotropic diffusion problem, on a square domain $\Omega$ and with Dirichlet boundary conditions:
\begin{align*}
-\epsilon u_{xx} - u_{yy} &= f(x,y) \, , \, (x, y) \in \Omega \, , \\
u &= g(x, y) \, , \, (x, y) \in \partial \Omega \, ,
\end{align*}

\noindent discretized on a square equi-spaced grid, where $0< \epsilon \leq 1$ is a parameter, and the coordinates $(x,y)$ form an angle $\phi$ with the grid-lines. 
In the tests we use a right-hand side and boundary conditions of zero, hence the exact solution is zero, and the starting guess is random (but it is the same in all tests). This allows us to check the asymptotic behavior of the method without numerical round-off errors (without loss of generality, because the problem is linear and the method is stationary).
This problem is challenging for standard geometric multigrid with simple coarsening when $\epsilon$ is small and the coordinates are not aligned with the grid, e.g., for $\phi = \pi/4$, which is our choice in most of the tests (see, e.g., \cite{Yav98}). We use the following nine-point discretization stencil: 

\medskip
\begin{center}
\begin{tabular}{ |c|c|c| }
\hline
 $\frac{1}{2}(1-\epsilon)CS$ & $-(\epsilon C^2+ S^2)$ & $-\frac{1}{2}(1-\epsilon)CS$ \\ 
\hline
 $-(C^2+\epsilon S^2)$ & $2(1+\epsilon)$ & $-(C^2+\epsilon S^2)$ \\  
\hline
 $-\frac{1}{2}(1-\epsilon)CS$ & $-(\epsilon C^2+ S^2)$ & $\frac{1}{2}(1-\epsilon)CS$ \\
\hline
\end{tabular} 
\end{center}
\medskip

\noindent
where $C=cos(\phi)$, $S=sin(\phi)$.

Six types of solvers are tested. 
One uses a stand-alone $\kappa$-cycle (which includes the $V$-cycle, $F$-cycle and $W$-cycle as special cases) with standard coarsening and simple relaxation---Jacobi with damping factor between 0.8 and 0.87 (following \cite{YO98}). A second solver uses Conjugate Gradients, preconditioned by a single $\kappa$-cycle per iteration, where the $\kappa$-cycle is the same as in the previous solver. 
The third solver uses an alternating ``zebra'' relaxation---see details below, and the standard full coarsening, together with Galerkin coarse-grid operators. 
The fourth solver uses Conjugate Gradients, preconditioned by a single $\kappa$-cycle per iteration, where the $\kappa$-cycle is the same as in the third solver. 
The fifth solver tested uses a line Gauss-Seidel relaxation (in a ``zebra'' order with no relaxation parameter) and semi-coarsening, with Galerkin coarse-grid operators---see details below. The sixth solver uses Conjugate Gradients, preconditioned by a single $\kappa$-cycle per iteration, where the $\kappa$-cycle is the same as in the fifth solver. 

We apply two pre-relaxations and two post-relaxations, because this gives better times than other configurations with equal number of pre-relaxations and post-relaxations (for the third and fourth solvers, two relaxations mean one relaxation in each direction). In this section, we use a maximum of 14 levels, which results in $(2^{14})^2 = 16384^2 = 268,435,456$ unknowns on the finest level.  The first two   computations are performed on a system with NVIDIA GTX 1060 GPU, having 1280 cores and a peak theoretical memory bandwidth of 192 GB/s. This would allow the GTX 1060 a throughput of 1280 arithmetic instructions per GPU clock cycle, if the instructions were for single precision floats. However, the throughput for double precision\footnote{Double precision is required for obtaining accurate solutions in large problems of this type.} floats is only 40 instructions per clock cycle on the GTX 1060 (see CUDA site, especially \cite{cuda_c_programming_guide} and \cite{pascal_tuning_guide}). Indeed, the throughput for doubles in most GPUs currently in use is much smaller than their float throughout. However, the impact of this in our implementation seems to be small, because in our tests the memory bandwidth is the bottleneck (so long as the required data size is above the 1.5MB L2 cache of the GPU). Still, the double precision floats, as their name implies, require double the memory bandwidth compared to single precision floats, and the time for the arithmetic instructions may still impact the overall time. We remark that the cycle time may be improved by using a hybrid program, as recommended by \cite{HPGMG_GPU}, whereby a few coarse levels of the cycle are run on the CPU, and finer levels are run on the GPU. Here, however, we use only the GPU for the computations, in order to obtain a clean demonstration of our results. This is also justified by the fact that, if current trends continue, the added value of using CPU is likely to greatly diminish over time. Indeed, the advantage of a hybrid program already seems to be small in our implementation, probably because the time needed for copying the data to CPU memory is already close to the time needed for running the coarser levels on the GPU.

\cref{fig:solve_times} shows the total run-times for reducing the $L_2$ error norms by a factor of $10^8$ for  $\epsilon \in \{10^{-5}, 10^{-4}, 10^{-3}, 10^{-2}, 10^{-1}, 1\}$, using 9 levels (``small'' problem with about 260K variables on the finest grid) and 13 levels (``large'' problem with about 67M finest-grid variables). Here, the $\kappa$-cycle is used as a stand-alone solver. For $\epsilon=1$ this is the simple Poisson problem, and the $V$-cycle yields the minimal run-time. As $\epsilon$ is decreased, the problem becomes more challenging, especially the large problem, and the $V$-cycle loses its efficiency because of the fast deterioration in its convergence factor. Still, the $W$-cycle, which has the best convergence factor, is expensive. For the small problem, the $F$-cycle becomes the most efficient for $\epsilon \leq 0.01$. For the large problem, however, larger $\kappa$ values are optimal once $\epsilon$ drops below 0.01, and $\kappa = 4$ becomes the optimal parameter for still smaller $\epsilon$ values.

\begin{figure}[hpbt!]
\centering
\includegraphics[scale=0.8]{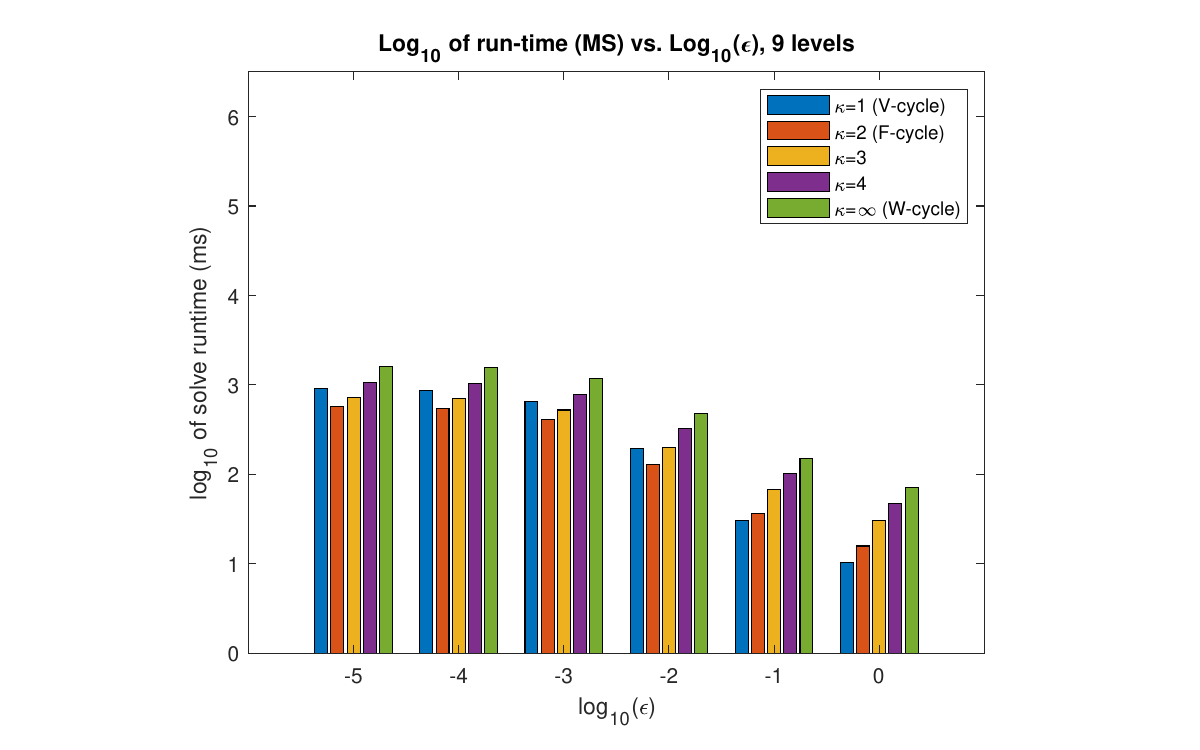}
\includegraphics[scale=0.8]{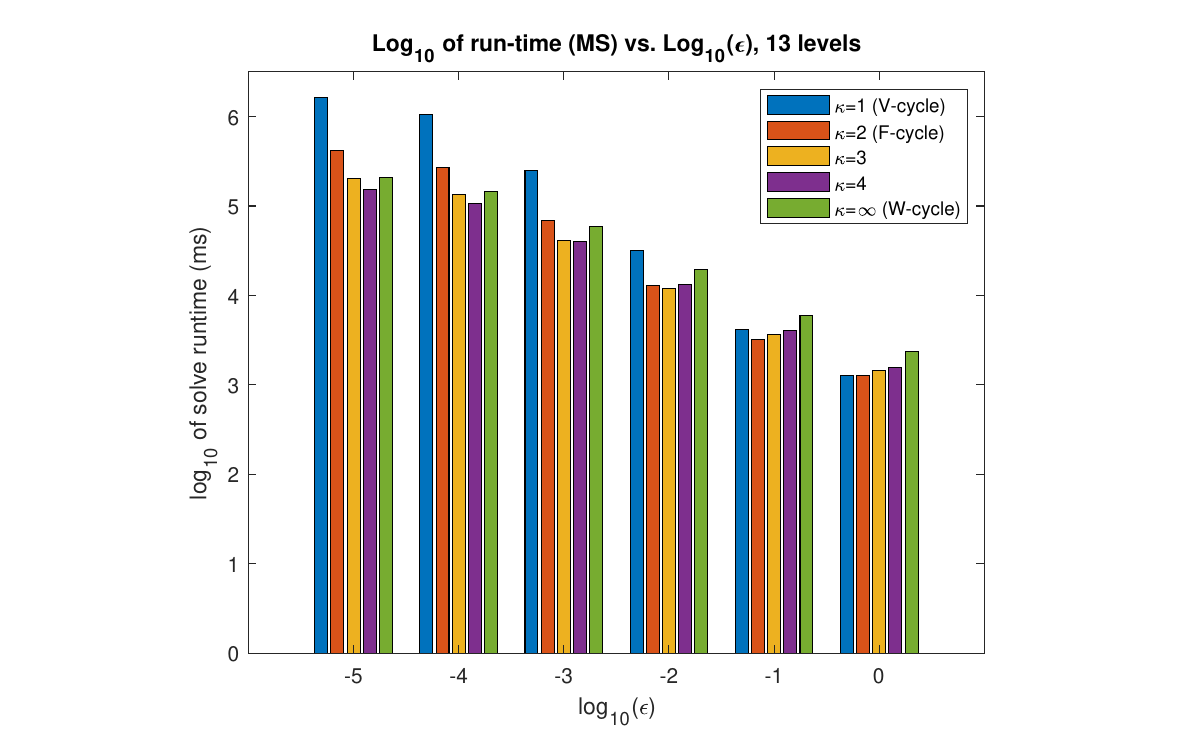}
\caption{Solve run-times}
\label{fig:solve_times}
\end{figure}

\begin{table}
\centering
\caption{Stand-alone multigrid with $\kappa(2,2)$, using damped Jacobi relaxation and standard coarsening, 12 levels, $\phi=45\degree$. Relative total time is the total time divided by that of the $W$-cycle}
\label{table:Stand_alone}
\begin{tabular}{ |c|c|c|c|c| }
\hline
 $\kappa$ & Total time(ms) & Relative total time & Average time per cycle (ms) & \#cycles \\  
\hline
 1 ($V$-cycle) & 182,216 & 4.29 & 26.4 & 6909 \\
\hline
 2 ($F$-cycle) & 52,080 & 1.23 & 37.1 & 1403 \\
\hline
 3 & 28,863 & 0.679 & 44.3 & 651 \\
\hline
 4 & \textbf{26,356} & \textbf{0.620} & 53.2 & 495 \\
\hline
 $\infty$ ($W$-cycle) & 42,483 & 1.00 & 90.4 & 470 \\
\hline
\end{tabular} 
\end{table}

\begin{table}
\centering
\caption{Conjugate Gradients preconditioned by a $\kappa$-cycle (MGCG), using damped Jacobi relaxation and standard coarsening, 12 levels, $\phi=45\degree$. Relative total time is the total time divided by that of the $F$-cycle}
\label{table:mgcg}
\begin{tabular}{ |c|c|c|c|c| }
\hline
 $\kappa$ & Total time(ms) & Relative total time & Average time per cycle (ms) & \#iterations \\  
\hline
 1 ($V$-cycle) & 9014 & 1.71 & 47.7 & 189 \\
\hline
 2 ($F$-cycle) & 5262 & 1.00 & 59.1 & 89 \\
\hline
 3 & \textbf{4252} & \textbf{0.808} & 67.5 & 63 \\
\hline
 4 & 4294 & 0.816 & 76.7 & 56 \\
\hline
 $\infty$ ($W$-cycle) & 6142 & 1.17 & 113.7 & 54 \\
\hline
\end{tabular}
\end{table}

\begin{table}
\centering
\caption[Stand-alone $\kappa$-cycle - zebra relaxations]{Stand-alone $\kappa$-cycle run-times (ms) using alternating zebra relaxation and Galerkin coarse-grid operators, 14 levels, the numbers in parentheses indicate the number of cycles}
\label{table:xyzebra_14_levels_e8}
\begin{tabular}{ |c|c|c|c|c|c| }
\hline
 $\kappa$ $\setminus$ angle & 10 & 30 & 45 & 60 & 80 \\
\hline
 1 ($V$-cycle) & 751452 (2909) & timeout & timeout & timeout & 835140 (3234) \\
\hline
 2 ($F$-cycle) & 197204 (589) & 451086 (1347) & 577985 (1725) & 553166 (1651) & 219901 (656) \\
\hline
 3 & 103595 (274) & 212531 (562) & 266935 (706) & 253346 (670) & 114530 (303) \\
\hline
 4 &\textbf{88675} (206) & \textbf{163207} (379) & \textbf{199501} (463) & \textbf{191667} (445) & \textbf{97792} (227) \\
\hline
 $\infty$ ($W$-cycle) & 159587 (193) & 268824 (326) & 316618 (385) & 313178 (381) & 174642 (212) \\
\hline
\end{tabular}
\end{table}

\begin{table}
\centering
\caption[Conjugate Gradients preconditioned by a $\kappa$-cycle - zebra relaxations]{Conjugate Gradients preconditioned by a $\kappa$-cycle (MGCG) run-times (ms) using alternating zebra relaxation and Galerkin coarse-grid operators, 14 levels, the numbers in parentheses indicate the number of iterations}
\label{table:xyzebra_14_levels_cg_e8}
\begin{tabular}{ |c|c|c|c|c|c| }
\hline
 $\kappa$ $\setminus$ angle & 10 & 30 & 45 & 60 & 80 \\
\hline
 1 ($V$-cycle) & 39348 (124) & 61707 (195) & 72305 (229) & 68893 (218) & 41898 (132) \\
\hline
 2 ($F$-cycle) & 23199 (58) & 33852 (85) & 38888 (98) & 37598 (95) & 24343 (61) \\
\hline
 3 & 18384 (41) & 25352 (57) & 28401 (64) & 27459 (62) & 19209 (43) \\
\hline
 4 & \textbf{18117} (36) & \textbf{23970} (48) & \textbf{26439} (53) & \textbf{25869} (52) & \textbf{19057} (38) \\
\hline
 $\infty$ ($W$-cycle) & 31548 (35) & 40367 (45) & 43921 (49) & 43763 (49) & 33166 (37) \\
\hline
\end{tabular}
\end{table}

\begin{table}
\centering
\caption[Stand-alone multigrid with $\kappa(2,2)$ - zebra relaxation, semi-coarsening]{Stand-alone multigrid with $\kappa(2,2)$ run-times (ms), using zebra relaxation, semi-coarsening and Galerkin coarse-grid operators, 14 levels, the numbers in parentheses indicate the number of cycles}
\label{table:xzebra_14_levels_1e8}
\begin{tabular}{ |c|c|c|c|c|c| }
\hline
 $\kappa$ $\setminus$ angle & 10 & 30 & 45 & 60 & 80 \\
\hline
 1 ($V$-cycle) &  618154 (1705) & timeout & timeout & timeout & 620274 (1711) \\
\hline
 2 ($F$-cycle) & 297372 (371) & 775767 (966) & 875116 (1093) & 677832 (846) & 193031 (241) \\
\hline
 3 & \textbf{281482} (187) & \textbf{594359} (395) & \textbf{616255} (410) & \textbf{450079} (299) & 114353 (76) \\
\hline
 4 & 410784 (148) & 730504 (263) & 704672 (254) & 482585 (174) & \textbf{110864} (40) \\
\hline
 $\infty$ ($W$-cycle) & timeout & timeout & timeout & timeout & 363991 (26) \\
\hline
\end{tabular}
\end{table}

\begin{table}
\centering
\caption[Conjugate Gradients preconditioned by a $\kappa$-cycle (MGCG) - zebra relaxation, semi-coarsening]{Conjugate Gradients preconditioned by a $\kappa$-cycle (MGCG) run-times (ms) with $\kappa(2,2)$, using zebra relaxation, semi-coarsening and Galerkin coarse-grid operators, 14 levels, the numbers in parentheses indicate the number of iterations}
\label{table:xzebra_14_levels_cg_1e8}
\begin{tabular}{ |c|c|c|c|c|c| }
\hline
 $\kappa$ $\setminus$ angle & 10 & 30 & 45 & 60 & 80 \\
\hline
 1 ($V$-cycle) & 39548 (93) & 70761 (164) & 79566 (189) & 69379 (165) & 39811 (94) \\
\hline
 2 ($F$-cycle) & \textbf{39377} (45) & \textbf{61557} (71) & \textbf{68511} (79) & \textbf{58007} (67) & \textbf{31492} (36) \\
\hline
 3 & 52292 (33) & 77894 (48) & 79088 (50) & 64627 (41) & 31889 (20) \\
\hline
 4 & 85674 (30) & 115353 (40) & 114044 (40) & 94055 (33) & 43082 (15) \\
\hline
 $\infty$ ($W$-cycle) & 411286 (29) & 538387 (38) & 510449 (36) & 408226 (29) & 170028 (12) \\
\hline
\end{tabular}
\end{table}

\Cref{table:Stand_alone} shows the total run-times for reducing the $L_2$ error norms by a factor of $10^8$ for $\epsilon=0.0001$ using 12 levels (about 17M finest-grid variables), where the $\kappa$-cycle is used as a stand-alone solver. These parameters result in a challenging problem, and we clearly see the tradeoff here, with the required number of cycles decreasing as $\kappa$ increases, but the average time per cycle increasing significantly. In terms of total run-time, the best tradeoff is provided for $\kappa=4$, with $\kappa=3$ close behind.

\Cref{table:mgcg} shows run-times for the case where the $\kappa$-cycle is used as a preconditioner for Conjugate Gradients (called MGCG in \cite{Tatebe93}). As expected, this yields a much more efficient solver than the stand-alone $\kappa$-cycle. We see once again that setting $\kappa$ to 3 or 4 yields superior run-times, this time with $\kappa = 3$ in the lead.


For the angle $\phi=45\degree$, the point relaxation used in the two tests described above is adequate, because the bottleneck is the coarse-grid correction. However, when $\epsilon$ is small and $\phi$ is close to 0 or 90 degrees, and thus the direction of strong coupling is nearly aligned with the grid, point relaxation such as Jacobi is known to be ineffective. In order to obtain a sufficiently good smoothing factor in these cases, there are two common methods: using line-relaxation along the coordinate of the strongly-coupled variables, or coarsening only in the coordinate of the strongly-coupled variables (semi-coarsening). Because in real problems we usually do not know in advance the direction of strong coupling (and it may change over the domain), a technique which takes care of both directions is used for robustness. One common method to obtain such a robust solver is using alternating line-relaxation: first along one coordinate and then along the other coordinate. Another well-known method is using line relaxation along one coordinate and coarsening only along the other coordinate.

\Cref{table:xyzebra_14_levels_e8} shows the total run-times for reducing the $L_2$ error norms by a factor of $10^8$ for $\epsilon=0.00001$ using 14 levels (about 268M finest-grid variables), where the $\kappa$-cycle is used as a stand-alone solver. The numbers in parentheses indicate the number of cycles. A comparison of several choices for the angle between the strong-diffusion direction and the $x$-coordinate is presented.
Here, the so-called alternating zebra relaxation (line Gauss Seidel in Red-Black ordering) is used first along the $x$-coordinate and then along the $y$-coordinate (only one relaxation in each direction is employed). Also, Galerkin coarsening is used for constructing the coarse-grid operator. This combination provides very good error smoothing for any angle $\phi$, and the cause for the slow convergence is inadequate coarse-grid corrections \cite{Yav98}. For every angle in the test, setting $\kappa$ to 4 yields a superior run-time.
Note that in this and in the next tests we used a machine with NVIDIA GeForce RTX 3090 GPU, which allowed us the required memory for 14 levels, whereby $\kappa > 2$ becomes advantageous.

\Cref{table:xyzebra_14_levels_cg_e8} shows run-times for the case where the $\kappa$-cycle is used as a preconditioner for Conjugate Gradients, using the same $\kappa$-cycle parameters as in \cref{table:xyzebra_14_levels_e8}.
As in the stand-alone version, for every angle in the test, setting $\kappa$ to 4 yields a superior run-time.

\Cref{table:xzebra_14_levels_1e8} shows run-times for cycles similar to those in \cref{table:xyzebra_14_levels_e8}, but with two relaxations along the x-coordinate (and no relaxation along the y-coordinate), and semi-coarsening: the grid is coarsened only along the $y$-coordinate.
As in the third solver, this combination also provides very good error smoothing for any angle.
For every angle in the test, setting $\kappa$ to 3 or 4 yields superior run-times.
Note that when semi-coarsening in 2D is used together with a $W$-cycle (as in this case), the number of operations required per cycle is no longer linear in N, but is $O(N \log N)$. This accounts for the sharp rise in run-times seen in the last row of \cref{table:xzebra_14_levels_1e8}. For other values of $\kappa$, the number of operations is linear in $N$, but the constant factor may be very large for large values of $\kappa$. This gives a relative advantage for small values of $\kappa$.

\Cref{table:xzebra_14_levels_cg_1e8} shows run-times for the case where the $\kappa$-cycle is used as a preconditioner for Conjugate Gradients, using the same $\kappa$-cycle parameters as in \cref{table:xzebra_14_levels_1e8}.
The semi-coarsening and line relaxations are expensive, especially for large values of $\kappa$, so for each of the angles, setting $\kappa$ to 2 (the $F$-cycle) yields the best time in this case.

Note that the damped Jacobi relaxations are symmetric, and the same number of relaxations is done before and after the recursive calls. Also, the full-weighting restriction operator is the adjoint of the bi-linear prolongation operator. As proved in \cite{Tatebe93}, these conditions are sufficient for the matrices of a $V$-cycle and a $W$-cycle to be symmetric and positive definite, when the start guess is zero, and therefore they are valid preconditioners for Conjugate Gradients. For $1 < \kappa< n$ the cycle is not symmetric (indeed, the $F$-cycle is described as ``not a valid preconditioner'' in \cite{Tatebe96}). Still, the MGCG algorithm converges in all the tests we checked, and, as in the example above, asymmetric $\kappa$-cycles can be better preconditioners than both $V$-cycles and $W$-cycles. 

The values of $\alpha$ and $\beta$ are system dependent. For larger ratios $\frac{\alpha}{\beta}$, the importance of smaller $\kappa$'s is more significant, because $N_{ops}$ does not change much for various $\kappa$ values, whereas the change in $N_{gpuCalls}$ may be huge, as can be inferred from \cref{table:notations}.

The code used for all experiments is publicly available at \url{https://github.com/oravnat/k_cycle} \cite{k_cycle_implementation}.

\section{Conclusions and further research} \label{sec:Conclusions} 
In this work, we have presented a new simple fixed recursive structure for multigrid algorithms, yielding a family of multigrid cycles governed by a cycle counter $\kappa$. We have derived theoretical complexity results for this algorithm, developed tools for practical prediction of run-time, and have demonstrated the new structure's utility in numerical tests, showing cases where the $\kappa$-cycle is more efficient than any one of the cycles in common use.

It would be worthwhile to explore $\kappa$-cycle performance in other problems and other computing platforms, with the aim of discovering regimes where even more significant improvement may be obtained. One such platform may be a distributed system with multiple connected computers. In such a platform the relative cost of using a large $\kappa$, and a $W$-cycle in particular, would probably be much bigger than in the GPU platform. Furthermore, distributed platforms allow solution of larger problems, and this may yield a benefit to intermediate $\kappa$'s. Also, a run-time model very similar to the one we have shown in (\ref{eq:LinearCombination}) for a GPU system may be relevant to distributed systems and other computing platforms. On the other hand, modern GPUs with reduced or partly hidden latency, should allow using larger $\kappa$ values to advantage.  

In addition, it may be interesting to test the new family of cycles on nonsymmetric problems such as the convection-diffusion equation. 
Finally, the $\kappa$-cycle should also be tested for more complex problems that are more practical for real world applications, including AMG for unstructured problems with non-constant coarsening ratio and operator-complexity.

\section{Appendix} 

We wish to estimate the number of levels, $n$, where the computation cost matches the call cost, i.e., the turning point. For a given $\kappa$,
$$
\alpha\cdot N_{gpuCalls} = \beta\cdot N_{ops} = \beta\cdot C(\kappa)\cdot(2^n-1)^2 \, .
$$
Hence,
$$
n = \log_2 \left(1+\sqrt{\left(\frac{\alpha}{\beta}\cdot \frac{N_{gpuCalls}}{C(\kappa)} \right)} \right) \, ,
$$
where, for coarsening factor 0.25, $C(\kappa) = 2\left(1-\frac{1}{3^\kappa}\right)$ depends only on $\kappa$, and $N_{gpuCalls}$ depends on $\kappa$ and $n$, but does not depend on $\alpha$ or on $\beta$. This last equation can be used to iteratively solve the problem and find the number of levels (and the number of unknowns). We have initialized $n$ to 2 and found that 20 iterations suffice for converging to $n_{tp}$.

\bibliographystyle{siam}
\bibliography{MGbib,bib_test}

\begin{thebibliography}{10}

\bibitem{k_cycle_implementation}
{\sc O.~Avnat}, {\em Implementation of $\kappa$-cycle}.
\newblock \url{https://github.com/oravnat/k_cycle}, 2025.

\bibitem{baker2012preparing}
{\sc A.~H. Baker, R.~D. Falgout, H.~Gahvari, T.~Gamblin, W.~Gropp, K.~E. Jordan, T.~V. Kolev, M.~Schulz, and U.~M. Yang}, {\em Preparing algebraic multigrid for exascale}, Lawrence Livermore National Laboratory, Tech. Rep. LLNL-TR-533076,  (2012).

\bibitem{BFG16}
{\sc A.~Bienz, R.~D. Falgout, W.~Gropp, L.~N. Olson, and J.~B. Schroder}, {\em Reducing parallel communication in algebraic multigrid through sparsification}, {SIAM} J. Sci. Comput., 38 (2016), pp.~S332--S357.

\bibitem{Bra73}
{\sc A.~Brandt}, {\em Multi-level adaptive technique {(MLAT)} for fast numerical solution to boundary value problems}, in Proceedings of the Third International Conference on Numerical Methods in Fluid Mechanics, Springer, 1973, pp.~82--89.

\bibitem{Bra77}
{\sc A.~Brandt}, {\em Multi-level adaptive solutions to boundary-value problems}, Mathematics of Computation, 31 (1977), pp.~333--390.

\bibitem{Bra82}
\leavevmode\vrule height 2pt depth -1.6pt width 23pt, {\em Guide to multigrid development}, in Multigrid Methods, W.~Hackbusch and U.~Trottenberg, eds., Springer-Verlag, Berlin, 1982, pp.~220--312.

\bibitem{BHM00}
{\sc W.~L. Briggs, V.~E. Henson, and S.~F. McCormick}, {\em A multigrid tutorial}, SIAM, second~ed., 2000.

\bibitem{RDF06}
{\sc R.~D. Falgout}, {\em An introduction to algebraic multigrid}, IEEE: Computing in Science and Engineering, 8 (2006), pp.~24--33.

\bibitem{FS14}
{\sc R.~D. Falgout and J.~B. Schroder}, {\em {Non-Galerkin} coarse grids for algebraic multigrid}, {SIAM} J. Sci. Comput., 36 (2014), pp.~C309--C334.

\bibitem{GGJ13}
{\sc H.~Gahvari, W.~Gropp, K.~E. Jordan, M.~Schulz, and U.~M. Yang}, {\em Systematic reduction of data movement in algebraic multigrid solvers}, in 2013 {IEEE} International Symposium on Parallel {\&} Distributed Processing, Workshops and Phd Forum, Cambridge, MA, USA, May 20-24, 2013, {IEEE}, 2013, pp.~1675--1682.

\bibitem{Hac85}
{\sc W.~Hackbusch}, {\em Multi-grid methods and applications}, Springer, Berlin, 1985.

\bibitem{cuda_c_programming_guide}
{\sc NVIDIA}, {\em {CUDA C++} programming guide}.
\newblock https://docs.nvidia.com/cuda/cuda-c-programming-guide/index.html.

\bibitem{pascal_tuning_guide}
\leavevmode\vrule height 2pt depth -1.6pt width 23pt, {\em {Pascal tuning guide}}.
\newblock https://docs.nvidia.com/cuda/pascal-tuning-guide/index.html.

\bibitem{ROM18}
{\sc A.~Reisner, L.~N. Olson, and J.~D. Moulton}, {\em Scaling structured multigrid to 500k+ cores through coarse-grid redistribution}, {SIAM} J. Sci. Comput., 40 (2018), pp.~C581--C604.

\bibitem{Rue93}
{\sc U.~Ruede}, {\em Fully adaptive multigrid methods}, SIAM J. Numer. Anal., 30 (1993), pp.~230--248.

\bibitem{HPGMG_GPU}
{\sc N.~Sakharnykh}, {\em High-performance geometric multi-grid with {GPU} acceleration},  (2016).
\newblock https://developer.nvidia.com/blog/high-performance-geometric-multi-grid-gpu-acceleration/.

\bibitem{Tatebe93}
{\sc O.~Tatebe}, {\em The multigrid preconditioned conjugate gradient method}, in Proceedings of The Sixth Copper Mountain Conference on Multigrid Methods, Part 2, 1993, pp.~621--634.

\bibitem{Tatebe96}
\leavevmode\vrule height 2pt depth -1.6pt width 23pt, {\em A Robust and Highly Parallel Iterative Method}, {Ph.D. Thesis}, The University of Tokyo, 1996.

\bibitem{TY15}
{\sc E.~Treister and I.~Yavneh}, {\em {Non-Galerkin} multigrid based on sparsified smoothed aggregation}, {SIAM} J. Sci. Comput., 37 (2015), pp.~A30--A54.

\bibitem{TOS01}
{\sc U.~Trottenberg, C.~Oosterlee, and A.~Sch\"{u}ller}, {\em Multigrid}, Academic Press, London and San Diego, 2001.

\bibitem{DBLP:conf/ics/VernerSS11}
{\sc U.~Verner, A.~Schuster, and M.~Silberstein}, {\em Processing data streams with hard real-time constraints on heterogeneous systems}, in Proceedings of the 25th International Conference on Supercomputing, 2011, Tucson, AZ, USA, May 31 - June 04, 2011, D.~K. Lowenthal, B.~R. de~Supinski, and S.~A. McKee, eds., {ACM}, 2011, pp.~120--129.

\bibitem{Yav98}
{\sc I.~Yavneh}, {\em Coarse-grid correction for nonelliptic and singular perturbation problems}, SIAM J. Sci. Comput., 19 (1998), pp.~1682--1699.

\bibitem{Yav06}
{\sc I.~Yavneh}, {\em Why multigrid methods are so efficient}, IEEE: Computing in Science and Engineering, 8 (2006), pp.~12--22.

\bibitem{YO98}
{\sc I.~Yavneh and E.~Olvovsky}, {\em Multigrid smoothing for symmetric nine-point stencils}, Appl. Math. Comput., 92 (1998), pp.~229--246.

\end{thebibliography}

\end{document}